\documentclass{article}
\usepackage[a4paper]{geometry}
\usepackage[utf8]{inputenc}
\usepackage[all]{xy}
\usepackage[initials]{amsrefs}
\usepackage[labelsep=period]{caption}
\usepackage[fleqn]{amsmath}
\usepackage{amsfonts,authblk,bm,doi,enumitem,fancyhdr,float,graphicx,ifthen,amssymb,amsthm,mathrsfs,mathtools,mleftright,relsize,sectsty,subcaption,tikz,tikz-3dplot,titlesec,wrapfig}
\usepackage[svgnames]{xcolor}

\newtheorem{theorem}{Theorem}

\newtheorem{lemma}[theorem]{Lemma}
\newtheorem{corollary}[theorem]{Corollary}
\theoremstyle{definition}
\newtheorem{definition}[theorem]{Definition}
\newtheorem{example}[theorem]{Example}
\newtheorem{remark}[theorem]{Remark}
\theoremstyle{plain}
\newtheorem{innerthm}{Theorem}
\newenvironment{maintheorem}[1]
  {\renewcommand\theinnerthm{#1}\innerthm}
  {\endinnerthm}

\numberwithin{equation}{section} 

\setlist[enumerate]{leftmargin=20pt,itemsep=0pt,topsep=0pt}
\setlist[enumerate,1]{label=\emph{(\roman*)}}
\setlist[itemize]{leftmargin=20pt,itemsep=0pt,topsep=0pt}

\newcommand{\SL}[1]{\ensuremath{\mathrm{SL}_{#1}}}
\newcommand{\Z}{\mathbb{Z}}

\renewcommand{\geq}{\geqslant}
\renewcommand{\leq}{\leqslant}

\usepackage[many]{tcolorbox}

\newcommand{\mytcbox}[2]{
\newtcbox{#1}{
  on line,
  boxrule=0pt,
  arc=2pt,
  colback=#2,
  left=0.3pt,right=0.3pt,top=0.4pt,bottom=0.4pt
}
}

\mytcbox{\frmul}{red!15}
\mytcbox{\frdiv}{black!5}

\usepackage{circuitikz}
\usetikzlibrary{math}

\newcommand{\discfarey}{
   \draw[gray,ultra thin] (0,-1) -- (0,1); 
 
   \foreach \thetaone/\thetatwo/\startangle/\finishangle/\R in 
{
0.0/90.0/-90.0/-180.0/1.0,36.8699/90.0/-53.1301/-180.0/0.5,0.0/36.8699/-90.0/-233.1301/0.333,53.1301/90.0/-36.8699/-180.0/0.333,36.8699/53.1301/-53.1301/-216.8699/0.143,22.6199/36.8699/-67.3801/-233.1301/0.125,0.0/22.6199/-90.0/-247.3801/0.2,61.9275/90.0/-28.0725/-180.0/0.25,53.1301/61.9275/-36.8699/-208.0725/0.077,46.3972/53.1301/-43.6028/-216.8699/0.059,36.8699/46.3972/-53.1301/-223.6028/0.083,28.0725/36.8699/-61.9275/-233.1301/0.077,22.6199/28.0725/-67.3801/-241.9275/0.048,16.2602/22.6199/-73.7398/-247.3801/0.056,0.0/16.2602/-90.0/-253.7398/0.143,67.3801/90.0/-22.6199/-180.0/0.2,61.9275/67.3801/-28.0725/-202.6199/0.048,58.1092/61.9275/-31.8908/-208.0725/0.033,53.1301/58.1092/-36.8699/-211.8908/0.043,48.8879/53.1301/-41.1121/-216.8699/0.037,46.3972/48.8879/-43.6028/-221.1121/0.022,43.6028/46.3972/-46.3972/-223.6028/0.024,36.8699/43.6028/-53.1301/-226.3972/0.059,30.5102/36.8699/-59.4898/-233.1301/0.056,28.0725/30.5102/-61.9275/-239.4898/0.021,25.9892/28.0725/-64.0108/-241.9275/0.018,22.6199/25.9892/-67.3801/-244.0108/0.029,18.9246/22.6199/-71.0754/-247.3801/0.032,16.2602/18.9246/-73.7398/-251.0754/0.023,12.6804/16.2602/-77.3196/-253.7398/0.031,0.0/12.6804/-90.0/-257.3196/0.111,71.0754/90.0/-18.9246/-180.0/0.167,67.3801/71.0754/-22.6199/-198.9246/0.032,64.9424/67.3801/-25.0576/-202.6199/0.021,61.9275/64.9424/-28.0725/-205.0576/0.026,59.4898/61.9275/-30.5102/-208.0725/0.021,58.1092/59.4898/-31.8908/-210.5102/0.012,56.6015/58.1092/-33.3985/-211.8908/0.013,53.1301/56.6015/-36.8699/-213.3985/0.03,50.0338/53.1301/-39.9662/-216.8699/0.027,48.8879/50.0338/-41.1121/-219.9662/0.01,47.925/48.8879/-42.075/-221.1121/0.008,46.3972/47.925/-43.6028/-222.075/0.013,44.7603/46.3972/-45.2397/-223.6028/0.014,43.6028/44.7603/-46.3972/-225.2397/0.01,42.075/43.6028/-47.925/-226.3972/0.013,36.8699/42.075/-53.1301/-227.925/0.045,31.8908/36.8699/-58.1092/-233.1301/0.043,30.5102/31.8908/-59.4898/-238.1092/0.012,29.4871/30.5102/-60.5129/-239.4898/0.009,28.0725/29.4871/-61.9275/-240.5129/0.012,26.785/28.0725/-63.215/-241.9275/0.011,25.9892/26.785/-64.0108/-243.215/0.007,25.0576/25.9892/-64.9424/-244.0108/0.008,22.6199/25.0576/-67.3801/-244.9424/0.021,20.016/22.6199/-69.984/-247.3801/0.023,18.9246/20.016/-71.0754/-249.984/0.01,17.9453/18.9246/-72.0547/-251.0754/0.009,16.2602/17.9453/-73.7398/-252.0547/0.015,14.25/16.2602/-75.75/-253.7398/0.018,12.6804/14.25/-77.3196/-255.75/0.014,10.3889/12.6804/-79.6111/-257.3196/0.02,0.0/10.3889/-90.0/-259.6111/0.091,73.7398/90.0/-16.2602/-180.0/0.143,71.0754/73.7398/-18.9246/-196.2602/0.023,69.3903/71.0754/-20.6097/-198.9246/0.015,67.3801/69.3903/-22.6199/-200.6097/0.018,65.8105/67.3801/-24.1895/-202.6199/0.014,64.9424/65.8105/-25.0576/-204.1895/0.008,64.0108/64.9424/-25.9892/-205.0576/0.008,61.9275/64.0108/-28.0725/-205.9892/0.018,60.1372/61.9275/-29.8628/-208.0725/0.016,59.4898/60.1372/-30.5102/-209.8628/0.006,58.9518/59.4898/-31.0482/-210.5102/0.005,58.1092/58.9518/-31.8908/-211.0482/0.007,57.2209/58.1092/-32.7791/-211.8908/0.008,56.6015/57.2209/-33.3985/-212.7791/0.005,55.7945/56.6015/-34.2055/-213.3985/0.007,53.1301/55.7945/-36.8699/-214.2055/0.023,50.6924/53.1301/-39.3076/-216.8699/0.021,50.0338/50.6924/-39.9662/-219.3076/0.006,49.5503/50.0338/-40.4497/-219.9662/0.004,48.8879/49.5503/-41.1121/-220.4497/0.006,48.2911/48.8879/-41.7089/-221.1121/0.005,47.925/48.2911/-42.075/-221.7089/0.003,47.499/47.925/-42.501/-222.075/0.004,46.3972/47.499/-43.6028/-222.501/0.01,45.2397/46.3972/-44.7603/-223.6028/0.01,44.7603/45.2397/-45.2397/-224.7603/0.004,44.3327/44.7603/-45.6673/-225.2397/0.004,43.6028/44.3327/-46.3972/-225.6673/0.006,42.7412/43.6028/-47.2588/-226.3972/0.008,42.075/42.7412/-47.925/-227.2588/0.006,41.1121/42.075/-48.8879/-227.925/0.008,36.8699/41.1121/-53.1301/-228.8879/0.037,32.7791/36.8699/-57.2209/-233.1301/0.036,31.8908/32.7791/-58.1092/-237.2209/0.008,31.2845/31.8908/-58.7155/-238.1092/0.005,30.5102/31.2845/-59.4898/-238.7155/0.007,29.8628/30.5102/-60.1372/-239.4898/0.006,29.4871/29.8628/-60.5129/-240.1372/0.003,29.0689/29.4871/-60.9311/-240.5129/0.004,28.0725/29.0689/-61.9275/-240.9311/0.009,27.1409/28.0725/-62.8591/-241.9275/0.008,26.785/27.1409/-63.215/-242.8591/0.003,26.481/26.785/-63.519/-243.215/0.003,25.9892/26.481/-64.0108/-243.519/0.004,25.4487/25.9892/-64.5513/-244.0108/0.005,25.0576/25.4487/-64.9424/-244.5513/0.003,24.5295/25.0576/-65.4705/-244.9424/0.005,22.6199/24.5295/-67.3801/-245.4705/0.017,20.6097/22.6199/-69.3903/-247.3801/0.018,20.016/20.6097/-69.984/-249.3903/0.005,19.5648/20.016/-70.4352/-249.984/0.004,18.9246/19.5648/-71.0754/-250.4352/0.006,18.3247/18.9246/-71.6753/-251.0754/0.005,17.9453/18.3247/-72.0547/-251.6753/0.003,17.4923/17.9453/-72.5077/-252.0547/0.004,16.2602/17.4923/-73.7398/-252.5077/0.011,14.8628/16.2602/-75.1372/-253.7398/0.012,14.25/14.8628/-75.75/-255.1372/0.005,13.6855/14.25/-76.3145/-255.75/0.005,12.6804/13.6855/-77.3196/-256.3145/0.009,11.4212/12.6804/-78.5788/-257.3196/0.011,10.3889/11.4212/-79.6111/-258.5788/0.009,8.7974/10.3889/-81.2026/-259.6111/0.014,0.0/8.7974/-90.0/-261.2026/0.077,75.75/90.0/-14.25/-180.0/0.125,73.7398/75.75/-16.2602/-194.25/0.018,72.5077/73.7398/-17.4923/-196.2602/0.011,71.0754/72.5077/-18.9246/-197.4923/0.012,69.984/71.0754/-20.016/-198.9246/0.01,69.3903/69.984/-20.6097/-200.016/0.005,68.7607/69.3903/-21.2393/-200.6097/0.005,67.3801/68.7607/-22.6199/-201.2393/0.012,66.2227/67.3801/-23.7773/-202.6199/0.01,65.8105/66.2227/-24.1895/-203.7773/0.004,65.4705/65.8105/-24.5295/-204.1895/0.003,64.9424/65.4705/-25.0576/-204.5295/0.005,64.3915/64.9424/-25.6085/-205.0576/0.005,64.0108/64.3915/-25.9892/-205.6085/0.003,63.519/64.0108/-26.481/-205.9892/0.004,61.9275/63.519/-28.0725/-206.481/0.014,60.5129/61.9275/-29.4871/-208.0725/0.012,60.1372/60.5129/-29.8628/-209.4871/0.003,58.7155/58.9518/-31.2845/-211.0482/0.002,58.1092/58.7155/-31.8908/-211.2845/0.005,57.4796/58.1092/-32.5204/-211.8908/0.005,57.2209/57.4796/-32.7791/-212.5204/0.002,56.145/56.6015/-33.855/-213.3985/0.004,55.7945/56.145/-34.2055/-213.855/0.003,55.292/55.7945/-34.708/-214.2055/0.004,53.1301/55.292/-36.8699/-214.708/0.019,51.1199/53.1301/-38.8801/-216.8699/0.018,50.6924/51.1199/-39.3076/-218.8801/0.004,47.2588/47.499/-42.7412/-222.501/0.002,46.3972/47.2588/-43.6028/-222.7412/0.008,45.502/46.3972/-44.498/-223.6028/0.008,45.2397/45.502/-44.7603/-224.498/0.002,44.1358/44.3327/-45.8642/-225.6673/0.002,43.6028/44.1358/-46.3972/-225.8642/0.005,43.0029/43.6028/-46.9971/-226.3972/0.005,42.7412/43.0029/-47.2588/-226.9971/0.002,41.5445/42.075/-48.4555/-227.925/0.005,41.1121/41.5445/-48.8879/-228.4555/0.004,40.4497/41.1121/-49.5503/-228.8879/0.006,36.8699/40.4497/-53.1301/-229.5503/0.031,33.3985/36.8699/-56.6015/-233.1301/0.03,32.7791/33.3985/-57.2209/-236.6015/0.005,32.3784/32.7791/-57.6216/-237.2209/0.003,31.8908/32.3784/-58.1092/-237.6216/0.004,31.0482/31.2845/-58.9518/-238.7155/0.002,30.5102/31.0482/-59.4898/-238.9518/0.005,28.8415/29.0689/-61.1585/-240.9311/0.002,28.0725/28.8415/-61.9275/-241.1585/0.007,27.3426/28.0725/-62.6574/-241.9275/0.006,27.1409/27.3426/-62.8591/-242.6574/0.002,24.1895/24.5295/-65.8105/-245.4705/0.003,22.6199/24.1895/-67.3801/-245.8105/0.014,20.983/22.6199/-69.017/-247.3801/0.014,20.6097/20.983/-69.3903/-249.017/0.003,17.2313/17.4923/-72.7687/-252.5077/0.002,16.2602/17.2313/-73.7398/-252.7687/0.008,15.1893/16.2602/-74.8107/-253.7398/0.009,14.8628/15.1893/-75.1372/-254.8107/0.003,13.4197/13.6855/-76.5803/-256.3145/0.002,12.6804/13.4197/-77.3196/-256.5803/0.006,11.8123/12.6804/-78.1877/-257.3196/0.008,11.4212/11.8123/-78.5788/-258.1877/0.003,11.0551/11.4212/-78.9449/-258.5788/0.003,10.3889/11.0551/-79.6111/-258.9449/0.006,9.5273/10.3889/-80.4727/-259.6111/0.008,8.7974/9.5273/-81.2026/-260.4727/0.006,7.6281/8.7974/-82.3719/-261.2026/0.01,0.0/7.6281/-90.0/-262.3719/0.067,77.3196/90.0/-12.6804/-180.0/0.111,75.75/77.3196/-14.25/-192.6804/0.014,74.8107/75.75/-15.1893/-194.25/0.008,73.7398/74.8107/-16.2602/-195.1893/0.009,72.9385/73.7398/-17.0615/-196.2602/0.007,72.5077/72.9385/-17.4923/-197.0615/0.004,72.0547/72.5077/-17.9453/-197.4923/0.004,71.0754/72.0547/-18.9246/-197.9453/0.009,70.2684/71.0754/-19.7316/-198.9246/0.007,69.984/70.2684/-20.016/-199.7316/0.002,68.4314/68.7607/-21.5686/-201.2393/0.003,67.3801/68.4314/-22.6199/-201.5686/0.009,66.4634/67.3801/-23.5366/-202.6199/0.008,66.2227/66.4634/-23.7773/-203.5366/0.002,63.215/63.519/-26.785/-206.481/0.003,61.9275/63.215/-28.0725/-206.785/0.011,60.7583/61.9275/-29.2417/-208.0725/0.01,60.5129/60.7583/-29.4871/-209.2417/0.002,54.9489/55.292/-35.0511/-214.708/0.003,53.1301/54.9489/-36.8699/-215.0511/0.016,51.4199/53.1301/-38.5801/-216.8699/0.015,51.1199/51.4199/-38.8801/-218.5801/0.003,47.1045/47.2588/-42.8955/-222.7412/0.001,46.3972/47.1045/-43.6028/-222.8955/0.006,45.6673/46.3972/-44.3327/-223.6028/0.006,45.502/45.6673/-44.498/-224.3327/0.001,39.9662/40.4497/-50.0338/-229.5503/0.004,36.8699/39.9662/-53.1301/-230.0338/0.027,33.855/36.8699/-56.145/-233.1301/0.026,33.3985/33.855/-56.6015/-236.145/0.004,28.6987/28.8415/-61.3013/-241.1585/0.001,28.0725/28.6987/-61.9275/-241.3013/0.005,27.4725/28.0725/-62.5275/-241.9275/0.005,27.3426/27.4725/-62.6574/-242.5275/0.001,23.9523/24.1895/-66.0477/-245.8105/0.002,22.6199/23.9523/-67.3801/-246.0477/0.012,21.2393/22.6199/-68.7607/-247.3801/0.012,20.983/21.2393/-69.017/-248.7607/0.002,17.0615/17.2313/-72.9385/-252.7687/0.001,16.2602/17.0615/-73.7398/-252.9385/0.007,15.3921/16.2602/-74.6079/-253.7398/0.008,15.1893/15.3921/-74.8107/-254.6079/0.002,13.265/13.4197/-76.735/-256.5803/0.001,12.6804/13.265/-77.3196/-256.735/0.005,12.018/12.6804/-77.982/-257.3196/0.006,11.8123/12.018/-78.1877/-257.982/0.002,9.7982/10.3889/-80.2018/-259.6111/0.005,9.5273/9.7982/-80.4727/-260.2018/0.002,9.2709/9.5273/-80.7291/-260.4727/0.002,8.7974/9.2709/-81.2026/-260.7291/0.004,8.1712/8.7974/-81.8288/-261.2026/0.005,7.6281/8.1712/-82.3719/-261.8288/0.005,6.7329/7.6281/-83.2671/-262.3719/0.008,0.0/6.7329/-90.0/-263.2671/0.059,78.5788/90.0/-11.4212/-180.0/0.1,77.3196/78.5788/-12.6804/-191.4212/0.011,76.5803/77.3196/-13.4197/-192.6804/0.006,75.75/76.5803/-14.25/-193.4197/0.007,75.1372/75.75/-14.8628/-194.25/0.005,74.8107/75.1372/-15.1893/-194.8628/0.003,74.4697/74.8107/-15.5303/-195.1893/0.003,73.7398/74.4697/-16.2602/-195.5303/0.006,73.1461/73.7398/-16.8539/-196.2602/0.005,72.9385/73.1461/-17.0615/-196.8539/0.002,71.8194/72.0547/-18.1806/-197.9453/0.002,71.0754/71.8194/-18.9246/-198.1806/0.006,70.4352/71.0754/-19.5648/-198.9246/0.006,70.2684/70.4352/-19.7316/-199.5648/0.001,68.2289/68.4314/-21.7711/-201.5686/0.002,67.3801/68.2289/-22.6199/-201.7711/0.007,66.6213/67.3801/-23.3787/-202.6199/0.007,66.4634/66.6213/-23.5366/-203.3787/0.001,63.0085/63.215/-26.9915/-206.785/0.002,61.9275/63.0085/-28.0725/-206.9915/0.009,60.9311/61.9275/-29.0689/-208.0725/0.009,60.7583/60.9311/-29.2417/-209.0689/0.002,54.6998/54.9489/-35.3002/-215.0511/0.002,53.1301/54.6998/-36.8699/-215.3002/0.014,51.642/53.1301/-38.358/-216.8699/0.013,51.4199/51.642/-38.5801/-218.358/0.002,46.9971/47.1045/-43.0029/-222.8955/0.001,46.3972/46.9971/-43.6028/-223.0029/0.005,45.7811/46.3972/-44.2189/-223.6028/0.005,45.6673/45.7811/-44.3327/-224.2189/0.001,39.5978/39.9662/-50.4022/-230.0338/0.003,36.8699/39.5978/-53.1301/-230.4022/0.024,34.2055/36.8699/-55.7945/-233.1301/0.023,33.855/34.2055/-56.145/-235.7945/0.003,23.7773/23.9523/-66.2227/-246.0477/0.002,22.6199/23.7773/-67.3801/-246.2227/0.01,21.4262/22.6199/-68.5738/-247.3801/0.01,21.2393/21.4262/-68.7607/-248.5738/0.002,16.9423/17.0615/-73.0577/-252.9385/0.001,16.2602/16.9423/-73.7398/-253.0577/0.006,15.5303/16.2602/-74.4697/-253.7398/0.006,15.3921/15.5303/-74.6079/-254.4697/0.001,7.1527/7.6281/-82.8473/-262.3719/0.004,6.7329/7.1527/-83.2671/-262.8473/0.004,6.0256/6.7329/-83.9744/-263.2671/0.006,0.0/6.0256/-90.0/-263.9744/0.053,79.6111/90.0/-10.3889/-180.0/0.091,78.5788/79.6111/-11.4212/-190.3889/0.009,77.982/78.5788/-12.018/-191.4212/0.005,77.3196/77.982/-12.6804/-192.018/0.006,76.8361/77.3196/-13.1639/-192.6804/0.004,76.5803/76.8361/-13.4197/-193.1639/0.002,76.3145/76.5803/-13.6855/-193.4197/0.002,75.75/76.3145/-14.25/-193.6855/0.005,74.2934/74.4697/-15.7066/-195.5303/0.002,73.7398/74.2934/-16.2602/-195.7066/0.005,71.6753/71.8194/-18.3247/-198.1806/0.001,71.0754/71.6753/-18.9246/-198.3247/0.005,68.0919/68.2289/-21.9081/-201.7711/0.001,67.3801/68.0919/-22.6199/-201.9081/0.006,66.7327/67.3801/-23.2673/-202.6199/0.006,66.6213/66.7327/-23.3787/-203.2673/0.001,62.8591/63.0085/-27.1409/-206.9915/0.001,61.9275/62.8591/-28.0725/-207.1409/0.008,61.0594/61.9275/-28.9406/-208.0725/0.008,60.9311/61.0594/-29.0689/-208.9406/0.001,54.5107/54.6998/-35.4893/-215.3002/0.002,53.1301/54.5107/-36.8699/-215.4893/0.012,51.813/53.1301/-38.187/-216.8699/0.011,51.642/51.813/-38.358/-218.187/0.001,39.3076/39.5978/-50.6924/-230.4022/0.003,36.8699/39.3076/-53.1301/-230.6924/0.021,34.4829/36.8699/-55.5171/-233.1301/0.021,34.2055/34.4829/-55.7945/-235.5171/0.002,23.643/23.7773/-66.357/-246.2227/0.001,22.6199/23.643/-67.3801/-246.357/0.009,21.5686/22.6199/-68.4314/-247.3801/0.009,21.4262/21.5686/-68.5738/-248.4314/0.001,15.6306/16.2602/-74.3694/-253.7398/0.005,15.5303/15.6306/-74.4697/-254.3694/0.001,6.3597/6.7329/-83.6403/-263.2671/0.003,6.0256/6.3597/-83.9744/-263.6403/0.003,5.4526/6.0256/-84.5474/-263.9744/0.005,0.0/5.4526/-90.0/-264.5474/0.048,80.4727/90.0/-9.5273/-180.0/0.083,79.6111/80.4727/-10.3889/-189.5273/0.008,79.1193/79.6111/-10.8807/-190.3889/0.004,78.5788/79.1193/-11.4212/-190.8807/0.005,67.9929/68.0919/-22.0071/-201.9081/0.001,67.3801/67.9929/-22.6199/-202.0071/0.005,62.746/62.8591/-27.254/-207.1409/0.001,61.9275/62.746/-28.0725/-207.254/0.007,61.1585/61.9275/-28.8415/-208.0725/0.007,61.0594/61.1585/-28.9406/-208.8415/0.001,54.3622/54.5107/-35.6378/-215.4893/0.001,53.1301/54.3622/-36.8699/-215.6378/0.011,51.9488/53.1301/-38.0512/-216.8699/0.01,51.813/51.9488/-38.187/-218.0512/0.001,39.0733/39.3076/-50.9267/-230.6924/0.002,36.8699/39.0733/-53.1301/-230.9267/0.019,34.708/36.8699/-55.292/-233.1301/0.019,34.4829/34.708/-55.5171/-235.292/0.002,23.5366/23.643/-66.4634/-246.357/0.001,22.6199/23.5366/-67.3801/-246.4634/0.008,21.6806/22.6199/-68.3194/-247.3801/0.008,21.5686/21.6806/-68.4314/-248.3194/0.001,4.9791/5.4526/-85.0209/-264.5474/0.004,0.0/4.9791/-90.0/-265.0209/0.043,81.2026/90.0/-8.7974/-180.0/0.077,80.4727/81.2026/-9.5273/-188.7974/0.006,80.0605/80.4727/-9.9395/-189.5273/0.004,79.6111/80.0605/-10.3889/-189.9395/0.004,62.6574/62.746/-27.3426/-207.254/0.001,61.9275/62.6574/-28.0725/-207.3426/0.006,61.2372/61.9275/-28.7628/-208.0725/0.006,61.1585/61.2372/-28.8415/-208.7628/0.001,54.2426/54.3622/-35.7574/-215.6378/0.001,53.1301/54.2426/-36.8699/-215.7574/0.01,52.0592/53.1301/-37.9408/-216.8699/0.009,51.9488/52.0592/-38.0512/-217.9408/0.001,38.8801/39.0733/-51.1199/-230.9267/0.002,36.8699/38.8801/-53.1301/-231.1199/0.018,34.8944/36.8699/-55.1056/-233.1301/0.017,34.708/34.8944/-55.292/-235.1056/0.002,23.4502/23.5366/-66.5498/-246.4634/0.001,22.6199/23.4502/-67.3801/-246.5498/0.007,21.7711/22.6199/-68.2289/-247.3801/0.007,21.6806/21.7711/-68.3194/-248.2289/0.001,4.5812/4.9791/-85.4188/-265.0209/0.003,0.0/4.5812/-90.0/-265.4188/0.04,81.8288/90.0/-8.1712/-180.0/0.071,81.2026/81.8288/-8.7974/-188.1712/0.005,80.8522/81.2026/-9.1478/-188.7974/0.003,80.4727/80.8522/-9.5273/-189.1478/0.003,62.5861/62.6574/-27.4139/-207.3426/0.001,61.9275/62.5861/-28.0725/-207.4139/0.006,61.3013/61.9275/-28.6987/-208.0725/0.005,61.2372/61.3013/-28.7628/-208.6987/0.001,54.1442/54.2426/-35.8558/-215.7574/0.001,53.1301/54.1442/-36.8699/-215.8558/0.009,52.1507/53.1301/-37.8493/-216.8699/0.009,52.0592/52.1507/-37.9408/-217.8493/0.001,38.718/38.8801/-51.282/-231.1199/0.001,36.8699/38.718/-53.1301/-231.282/0.016,35.0511/36.8699/-54.9489/-233.1301/0.016,34.8944/35.0511/-55.1056/-234.9489/0.001,23.3787/23.4502/-66.6213/-246.5498/0.001,22.6199/23.3787/-67.3801/-246.6213/0.007,21.8456/22.6199/-68.1544/-247.3801/0.007,21.7711/21.8456/-68.2289/-248.1544/0.001,4.2422/4.5812/-85.7578/-265.4188/0.003,0.0/4.2422/-90.0/-265.7578/0.037,82.3719/90.0/-7.6281/-180.0/0.067,81.8288/82.3719/-8.1712/-187.6281/0.005,54.0617/54.1442/-35.9383/-215.8558/0.001,53.1301/54.0617/-36.8699/-215.9383/0.008
}
    {
\pgfmathsetmacro{\costheta}{cos(\thetaone)}
\pgfmathsetmacro{\sintheta}{sin(\thetaone)}
	\draw[gray,ultra thin](\costheta,\sintheta) arc (\startangle:\finishangle:\R); 
     	\draw[gray,ultra thin](-\costheta,\sintheta) arc (180-\startangle:180-\finishangle:\R); 
	\draw[gray,ultra thin](\costheta,-\sintheta) arc (-\startangle:-\finishangle:\R); 
	\draw[gray,ultra thin](-\costheta,-\sintheta) arc (\startangle+180:\finishangle+180:\R); 
   }
   \draw (0,0) circle (1); 
}

\def\dischorocyclenolabel[#1](#2:#3)
{
	\tikzmath{
	    \A=#2;
      	    \B=#3;
      	    \R = 1/(1+\A*\A+\B*\B);
      	    if  #3!=0 then {
      	       \thetaone = 2*atan(\A/\B))-90;
          } else {
             \thetaone = 90;
          };
	} 
	\draw[#1] (\thetaone:{1-\R}) circle (\R);
}  

\def\dischorocycle[#1](#2:#3)
{
	\dischorocyclenolabel[#1](#2:#3);
	\pgfmathsetmacro{\radius}{1.11-0.03*cos(2*\thetaone)}
	\node at (\thetaone:\radius) {$\frac{\A}{\B}$};
}  

\def\shline[#1](#2:#3:#4:#5){%
  \begingroup

  \pgfmathtruncatemacro{\anti}{((#2)*(#4)+(#3)*(#5)==0) ? 1 : 0}

  \pgfmathsetmacro{\xA}{2*(#2)*(#3)/((#2)*(#2)+(#3)*(#3))}
  \pgfmathsetmacro{\yA}{((#2)*(#2)-(#3)*(#3))/((#2)*(#2)+(#3)*(#3))}
  \pgfmathsetmacro{\xB}{2*(#4)*(#5)/((#4)*(#4)+(#5)*(#5))}
  \pgfmathsetmacro{\yB}{((#4)*(#4)-(#5)*(#5))/((#4)*(#4)+(#5)*(#5))}

  \ifnum\anti=1
    \draw[#1] (\xB,\yB)--(\xA,\yA) node[currarrow,pos = 0.55,xscale=-1,sloped,scale=0.3]{};
  \else
    \pgfmathsetmacro{\dotp}{\xA*\xB+\yA*\yB}
    \pgfmathsetmacro{\den}{1+\dotp}
    \pgfmathsetmacro{\cx}{(\xA+\xB)/\den}
    \pgfmathsetmacro{\cy}{(\yA+\yB)/\den}
    \pgfmathsetmacro{\rad}{sqrt((1-\dotp)/\den)}
    \pgfmathsetmacro{\angA}{atan2(\yA-\cy,\xA-\cx)}
    \pgfmathsetmacro{\angB}{atan2(\yB-\cy,\xB-\cx)}

    \pgfmathsetmacro{\diff}{\angB-\angA}
    \ifdim\diff pt > 180pt\relax
      \pgfmathsetmacro{\angB}{\angB-360}
    \fi
    \ifdim\diff pt < -180pt\relax
      \pgfmathsetmacro{\angB}{\angB+360}
    \fi

    \draw[#1] (\xA,\yA)
      arc[start angle=\angA,end angle=\angB,radius=\rad] node[currarrow,pos = 0.5,allow upside down,sloped,scale=0.3]{};
  \fi

  \endgroup
}

\newcommand{\fcRightAngleSize}{0.035}
\newcommand{\fcDiameterTolerance}{0.0005}
\tikzset{
  fc right angle/.style={black,line width=0.25pt},
  fc segment label/.style={above,inner sep=1pt}
}

\newcommand{\fcRightAngleMark}[7]{
  \pgfmathsetmacro{\fcrax}{#1}%
  \pgfmathsetmacro{\fcray}{#2}%
  \pgfmathsetmacro{\fcraux}{#3}%
  \pgfmathsetmacro{\fcrauy}{#4}%
  \pgfmathsetmacro{\fcravx}{#5}%
  \pgfmathsetmacro{\fcravy}{#6}%
  \pgfmathsetmacro{\fcras}{#7}%
  \pgfmathsetmacro{\fcralenone}{sqrt(max(\fcraux*\fcraux+\fcrauy*\fcrauy,0.000001))}%
  \pgfmathsetmacro{\fcralentwo}{sqrt(max(\fcravx*\fcravx+\fcravy*\fcravy,0.000001))}%
  \pgfmathsetmacro{\fcrauux}{\fcraux/\fcralenone}%
  \pgfmathsetmacro{\fcrauuy}{\fcrauy/\fcralenone}%
  \pgfmathsetmacro{\fcravvx}{\fcravx/\fcralentwo}%
  \pgfmathsetmacro{\fcravvy}{\fcravy/\fcralentwo}%
  \draw[fc right angle]
    ({\fcrax+\fcras*\fcrauux},{\fcray+\fcras*\fcrauuy})
    -- ({\fcrax+\fcras*(\fcrauux+\fcravvx)},{\fcray+\fcras*(\fcrauuy+\fcravvy)})
    -- ({\fcrax+\fcras*\fcravvx},{\fcray+\fcras*\fcravvy});%
}

\newcommand{\fordsegment}[3][]{%
  \begingroup
  \def\fcparseford##1:##2:##3:##4\relax{%
    \pgfmathsetmacro{\fcfa}{##1}%
    \pgfmathsetmacro{\fcfb}{##2}%
    \pgfmathsetmacro{\fcfc}{##3}%
    \pgfmathsetmacro{\fcfd}{##4}%
  }%
  \expandafter\fcparseford#2\relax
  \pgfmathsetmacro{\fcdena}{\fcfa*\fcfa+\fcfb*\fcfb}%
  \pgfmathsetmacro{\fcdenb}{\fcfc*\fcfc+\fcfd*\fcfd}%
  \pgfmathsetmacro{\fcax}{2*\fcfa*\fcfb/\fcdena}%
  \pgfmathsetmacro{\fcay}{(\fcfa*\fcfa-\fcfb*\fcfb)/\fcdena}%
  \pgfmathsetmacro{\fcbx}{2*\fcfc*\fcfd/\fcdenb}%
  \pgfmathsetmacro{\fcby}{(\fcfc*\fcfc-\fcfd*\fcfd)/\fcdenb}%
  \pgfmathsetmacro{\fcra}{1/(1+\fcfa*\fcfa+\fcfb*\fcfb)}%
  \pgfmathsetmacro{\fcrb}{1/(1+\fcfc*\fcfc+\fcfd*\fcfd)}%
  \pgfmathsetmacro{\fchax}{(1-\fcra)*\fcax}%
  \pgfmathsetmacro{\fchay}{(1-\fcra)*\fcay}%
  \pgfmathsetmacro{\fchbx}{(1-\fcrb)*\fcbx}%
  \pgfmathsetmacro{\fchby}{(1-\fcrb)*\fcby}%
  \pgfmathtruncatemacro{\fcanti}{((\fcfa)*(\fcfc)+(\fcfb)*(\fcfd)==0) ? 1 : 0}%
  \ifnum\fcanti=1
    \pgfmathsetmacro{\fcix}{(1-2*\fcra)*\fcax}%
    \pgfmathsetmacro{\fciy}{(1-2*\fcra)*\fcay}%
    \pgfmathsetmacro{\fcjx}{(1-2*\fcrb)*\fcbx}%
    \pgfmathsetmacro{\fcjy}{(1-2*\fcrb)*\fcby}%
    \draw[#1] (\fcix,\fciy)--(\fcjx,\fcjy);%
    \pgfmathsetmacro{\fclx}{(\fcix+\fcjx)/2}%
    \pgfmathsetmacro{\fcly}{(\fciy+\fcjy)/2}%
    \node[fc segment label] at (\fclx,\fcly) {#3};%
    \pgfmathsetmacro{\fcuax}{\fcix-\fchax}%
    \pgfmathsetmacro{\fcuay}{\fciy-\fchay}%
    \pgfmathsetmacro{\fcvax}{-\fcay}%
    \pgfmathsetmacro{\fcvay}{\fcax}%
    \fcRightAngleMark{\fcix}{\fciy}{\fcuax}{\fcuay}{\fcvax}{\fcvay}{\fcRightAngleSize}%
    \pgfmathsetmacro{\fcubx}{\fcjx-\fchbx}%
    \pgfmathsetmacro{\fcuby}{\fcjy-\fchby}%
    \pgfmathsetmacro{\fcvbx}{-\fcby}%
    \pgfmathsetmacro{\fcvby}{\fcbx}%
    \fcRightAngleMark{\fcjx}{\fcjy}{\fcubx}{\fcuby}{\fcvbx}{\fcvby}{\fcRightAngleSize}%
  \else
    \pgfmathsetmacro{\fcdot}{\fcax*\fcbx+\fcay*\fcby}%
    \pgfmathsetmacro{\fcgden}{1+\fcdot}%
    \pgfmathsetmacro{\fccx}{(\fcax+\fcbx)/\fcgden}%
    \pgfmathsetmacro{\fccy}{(\fcay+\fcby)/\fcgden}%
    \pgfmathsetmacro{\fcgr}{sqrt(max((1-\fcdot)/\fcgden,0))}%
    \pgfmathsetmacro{\fceax}{-\fcay}%
    \pgfmathsetmacro{\fceay}{\fcax}%
    \pgfmathsetmacro{\fcka}{\fccx*\fceax+\fccy*\fceay}%
    \pgfmathsetmacro{\fcqa}{\fcka*\fcka+\fcra*\fcra}%
    \pgfmathsetmacro{\fcta}{-2*\fcra*\fcka*\fcka/\fcqa}%
    \pgfmathsetmacro{\fcsa}{2*\fcra*\fcra*\fcka/\fcqa}%
    \pgfmathsetmacro{\fcix}{(1+\fcta)*\fcax+\fcsa*\fceax}%
    \pgfmathsetmacro{\fciy}{(1+\fcta)*\fcay+\fcsa*\fceay}%
    \pgfmathsetmacro{\fcebx}{-\fcby}%
    \pgfmathsetmacro{\fceby}{\fcbx}%
    \pgfmathsetmacro{\fckb}{\fccx*\fcebx+\fccy*\fceby}%
    \pgfmathsetmacro{\fcqb}{\fckb*\fckb+\fcrb*\fcrb}%
    \pgfmathsetmacro{\fctb}{-2*\fcrb*\fckb*\fckb/\fcqb}%
    \pgfmathsetmacro{\fcsb}{2*\fcrb*\fcrb*\fckb/\fcqb}%
    \pgfmathsetmacro{\fcjx}{(1+\fctb)*\fcbx+\fcsb*\fcebx}%
    \pgfmathsetmacro{\fcjy}{(1+\fctb)*\fcby+\fcsb*\fceby}%
    \pgfmathsetmacro{\fcanga}{atan2(\fciy-\fccy,\fcix-\fccx)}%
    \pgfmathsetmacro{\fcangb}{atan2(\fcjy-\fccy,\fcjx-\fccx)}%
    \pgfmathsetmacro{\fcdiff}{\fcangb-\fcanga}%
    \ifdim\fcdiff pt > 180pt\relax
      \pgfmathsetmacro{\fcangb}{\fcangb-360}%
    \fi
    \ifdim\fcdiff pt < -180pt\relax
      \pgfmathsetmacro{\fcangb}{\fcangb+360}%
    \fi
    \draw[#1] (\fcix,\fciy) arc[start angle=\fcanga,end angle=\fcangb,radius=\fcgr];%
    \pgfmathsetmacro{\fcmidang}{(\fcanga+\fcangb)/2}%
    \pgfmathsetmacro{\fclx}{\fccx+\fcgr*cos(\fcmidang)}%
    \pgfmathsetmacro{\fcly}{\fccy+\fcgr*sin(\fcmidang)}%
    \node[fc segment label,left] at (\fclx,\fcly) {#3};%
    \pgfmathsetmacro{\fcuax}{\fcix-\fchax}%
    \pgfmathsetmacro{\fcuay}{\fciy-\fchay}%
    \pgfmathsetmacro{\fcvax}{\fcix-\fccx}%
    \pgfmathsetmacro{\fcvay}{\fciy-\fccy}%
    \fcRightAngleMark{\fcix}{\fciy}{\fcuax}{\fcuay}{\fcvax}{\fcvay}{\fcRightAngleSize}%
    \pgfmathsetmacro{\fcubx}{\fcjx-\fchbx}%
    \pgfmathsetmacro{\fcuby}{\fcjy-\fchby}%
    \pgfmathsetmacro{\fcvbx}{\fcjx-\fccx}%
    \pgfmathsetmacro{\fcvby}{\fcjy-\fccy}%
    \fcRightAngleMark{\fcjx}{\fcjy}{\fcubx}{\fcuby}{\fcvbx}{\fcvby}{\fcRightAngleSize}%
  \fi
  \endgroup
}

\newcommand{\fareysegment}[4][]{%
  \begingroup
  \def\fcparseone##1:##2:##3:##4\relax{%
    \pgfmathsetmacro{\fcoa}{##1}%
    \pgfmathsetmacro{\fcob}{##2}%
    \pgfmathsetmacro{\fcoc}{##3}%
    \pgfmathsetmacro{\fcod}{##4}%
  }%
  \def\fcparsetwo##1:##2:##3:##4\relax{%
    \pgfmathsetmacro{\fcta}{##1}%
    \pgfmathsetmacro{\fctb}{##2}%
    \pgfmathsetmacro{\fctc}{##3}%
    \pgfmathsetmacro{\fctd}{##4}%
  }%
  \expandafter\fcparseone#2\relax
  \expandafter\fcparsetwo#3\relax
  \pgfmathsetmacro{\fcodenone}{\fcoa*\fcoa+\fcob*\fcob}%
  \pgfmathsetmacro{\fcodentwo}{\fcoc*\fcoc+\fcod*\fcod}%
  \pgfmathsetmacro{\fcoax}{2*\fcoa*\fcob/\fcodenone}%
  \pgfmathsetmacro{\fcoay}{(\fcoa*\fcoa-\fcob*\fcob)/\fcodenone}%
  \pgfmathsetmacro{\fcobx}{2*\fcoc*\fcod/\fcodentwo}%
  \pgfmathsetmacro{\fcoby}{(\fcoc*\fcoc-\fcod*\fcod)/\fcodentwo}%
  \pgfmathsetmacro{\fcodot}{\fcoax*\fcobx+\fcoay*\fcoby}%
  \pgfmathsetmacro{\fcoantiabs}{abs(1+\fcodot)}%
  \pgfmathtruncatemacro{\fcoanti}{0}%
  \ifdim\fcoantiabs pt < 0.0001pt\relax
    \pgfmathtruncatemacro{\fcoanti}{1}%
    \pgfmathsetmacro{\fconx}{-\fcoay}%
    \pgfmathsetmacro{\fcony}{\fcoax}%
    \pgfmathsetmacro{\fconz}{0}%
  \else
    \pgfmathsetmacro{\fcocx}{(\fcoax+\fcobx)/(1+\fcodot)}%
    \pgfmathsetmacro{\fcocy}{(\fcoay+\fcoby)/(1+\fcodot)}%
    \pgfmathsetmacro{\fcor}{sqrt(max((1-\fcodot)/(1+\fcodot),0.000001))}%
    \pgfmathsetmacro{\fconx}{\fcocx/\fcor}%
    \pgfmathsetmacro{\fcony}{\fcocy/\fcor}%
    \pgfmathsetmacro{\fconz}{1/\fcor}%
  \fi
  \pgfmathsetmacro{\fctdenone}{\fcta*\fcta+\fctb*\fctb}%
  \pgfmathsetmacro{\fctdentwo}{\fctc*\fctc+\fctd*\fctd}%
  \pgfmathsetmacro{\fctax}{2*\fcta*\fctb/\fctdenone}%
  \pgfmathsetmacro{\fctay}{(\fcta*\fcta-\fctb*\fctb)/\fctdenone}%
  \pgfmathsetmacro{\fctbx}{2*\fctc*\fctd/\fctdentwo}%
  \pgfmathsetmacro{\fctby}{(\fctc*\fctc-\fctd*\fctd)/\fctdentwo}%
  \pgfmathsetmacro{\fctdot}{\fctax*\fctbx+\fctay*\fctby}%
  \pgfmathsetmacro{\fctantiabs}{abs(1+\fctdot)}%
  \pgfmathtruncatemacro{\fctanti}{0}%
  \ifdim\fctantiabs pt < 0.0001pt\relax
    \pgfmathtruncatemacro{\fctanti}{1}%
    \pgfmathsetmacro{\fctnx}{-\fctay}%
    \pgfmathsetmacro{\fctny}{\fctax}%
    \pgfmathsetmacro{\fctnz}{0}%
  \else
    \pgfmathsetmacro{\fctcx}{(\fctax+\fctbx)/(1+\fctdot)}%
    \pgfmathsetmacro{\fctcy}{(\fctay+\fctby)/(1+\fctdot)}%
    \pgfmathsetmacro{\fctr}{sqrt(max((1-\fctdot)/(1+\fctdot),0.000001))}%
    \pgfmathsetmacro{\fctnx}{\fctcx/\fctr}%
    \pgfmathsetmacro{\fctny}{\fctcy/\fctr}%
    \pgfmathsetmacro{\fctnz}{1/\fctr}%
  \fi
  \pgfmathsetmacro{\fcbraw}{\fconx*\fctnx+\fcony*\fctny-\fconz*\fctnz}%
  \pgfmathsetmacro{\fcbabs}{abs(\fcbraw)}%
  \pgfmathtruncatemacro{\fcbad}{0}%
  \ifdim\fcbabs pt < 1.0001pt\relax
    \pgfmathtruncatemacro{\fcbad}{1}%
    \PackageWarning{fareysegment}{The prescribed Farey lines intersect or are asymptotic, so there is no finite common perpendicular}%
  \fi
  \ifnum\fcbad=0
    \ifdim\fcbraw pt < 0pt\relax
      \pgfmathsetmacro{\fctnx}{-\fctnx}%
      \pgfmathsetmacro{\fctny}{-\fctny}%
      \pgfmathsetmacro{\fctnz}{-\fctnz}%
      \pgfmathsetmacro{\fcb}{-\fcbraw}%
    \else
      \pgfmathsetmacro{\fcb}{\fcbraw}%
    \fi
    \pgfmathsetmacro{\fcsqrt}{sqrt(max(\fcb*\fcb-1,0.000001))}%
    \pgfmathsetmacro{\fcHXone}{(\fctnx-\fcb*\fconx)/\fcsqrt}%
    \pgfmathsetmacro{\fcHYone}{(\fctny-\fcb*\fcony)/\fcsqrt}%
    \pgfmathsetmacro{\fcHTone}{(\fctnz-\fcb*\fconz)/\fcsqrt}%
    \pgfmathsetmacro{\fcHXtwo}{(\fcb*\fctnx-\fconx)/\fcsqrt}%
    \pgfmathsetmacro{\fcHYtwo}{(\fcb*\fctny-\fcony)/\fcsqrt}%
    \pgfmathsetmacro{\fcHTtwo}{(\fcb*\fctnz-\fconz)/\fcsqrt}%
    \ifdim\fcHTone pt < 0pt\relax
      \pgfmathsetmacro{\fcHXone}{-\fcHXone}%
      \pgfmathsetmacro{\fcHYone}{-\fcHYone}%
      \pgfmathsetmacro{\fcHTone}{-\fcHTone}%
      \pgfmathsetmacro{\fcHXtwo}{-\fcHXtwo}%
      \pgfmathsetmacro{\fcHYtwo}{-\fcHYtwo}%
      \pgfmathsetmacro{\fcHTtwo}{-\fcHTtwo}%
    \fi
    \pgfmathsetmacro{\fcfx}{\fcHXone/(\fcHTone+1)}%
    \pgfmathsetmacro{\fcfy}{\fcHYone/(\fcHTone+1)}%
    \pgfmathsetmacro{\fcgx}{\fcHXtwo/(\fcHTtwo+1)}%
    \pgfmathsetmacro{\fcgy}{\fcHYtwo/(\fcHTtwo+1)}%
    \pgfmathsetmacro{\fcmx}{\fconz*\fctny-\fcony*\fctnz}%
    \pgfmathsetmacro{\fcmy}{\fconx*\fctnz-\fconz*\fctnx}%
    \pgfmathsetmacro{\fcmz}{\fconx*\fctny-\fcony*\fctnx}%
    \pgfmathsetmacro{\fcmzabs}{abs(\fcmz)}%
    \ifdim\fcmzabs pt < \fcDiameterTolerance pt\relax
      \draw[#1] (\fcfx,\fcfy)--(\fcgx,\fcgy);%
      \pgfmathsetmacro{\fclx}{(\fcfx+\fcgx)/2}%
      \pgfmathsetmacro{\fcly}{(\fcfy+\fcgy)/2}%
      \node[fc segment label] at (\fclx,\fcly) {#4};%
      \pgfmathsetmacro{\fcuax}{\fcgx-\fcfx}%
      \pgfmathsetmacro{\fcuay}{\fcgy-\fcfy}%
      \pgfmathsetmacro{\fcubx}{\fcfx-\fcgx}%
      \pgfmathsetmacro{\fcuby}{\fcfy-\fcgy}%
    \else
      \pgfmathsetmacro{\fccccx}{\fcmx/\fcmz}%
      \pgfmathsetmacro{\fcccy}{\fcmy/\fcmz}%
      \pgfmathsetmacro{\fccr}{sqrt(max(\fccccx*\fccccx+\fcccy*\fcccy-1,0.000001))}%
      \pgfmathsetmacro{\fcanga}{atan2(\fcfy-\fcccy,\fcfx-\fccccx)}%
      \pgfmathsetmacro{\fcangb}{atan2(\fcgy-\fcccy,\fcgx-\fccccx)}%
      \pgfmathsetmacro{\fcdiff}{\fcangb-\fcanga}%
      \ifdim\fcdiff pt > 180pt\relax
        \pgfmathsetmacro{\fcangb}{\fcangb-360}%
      \fi
      \ifdim\fcdiff pt < -180pt\relax
        \pgfmathsetmacro{\fcangb}{\fcangb+360}%
      \fi
      \draw[#1] (\fcfx,\fcfy) arc[start angle=\fcanga,end angle=\fcangb,radius=\fccr];%
      \pgfmathsetmacro{\fcmidang}{(\fcanga+\fcangb)/2}%
      \pgfmathsetmacro{\fclx}{\fccccx+\fccr*cos(\fcmidang)}%
      \pgfmathsetmacro{\fcly}{\fcccy+\fccr*sin(\fcmidang)}%
      \node[fc segment label,left] at (\fclx,\fcly) {#4};%
      \pgfmathsetmacro{\fcdiff}{\fcangb-\fcanga}%
      \pgfmathsetmacro{\fcuax}{-(\fcfy-\fcccy)}%
      \pgfmathsetmacro{\fcuay}{\fcfx-\fccccx}%
      \pgfmathsetmacro{\fcubx}{\fcgy-\fcccy}%
      \pgfmathsetmacro{\fcuby}{-(\fcgx-\fccccx)}%
      \ifdim\fcdiff pt < 0pt\relax
        \pgfmathsetmacro{\fcuax}{-\fcuax}%
        \pgfmathsetmacro{\fcuay}{-\fcuay}%
        \pgfmathsetmacro{\fcubx}{-\fcubx}%
        \pgfmathsetmacro{\fcuby}{-\fcuby}%
      \fi
    \fi
    \ifnum\fcoanti=1
      \pgfmathsetmacro{\fcvax}{\fcobx-\fcoax}%
      \pgfmathsetmacro{\fcvay}{\fcoby-\fcoay}%
    \else
      \pgfmathsetmacro{\fcvax}{-(\fcfy-\fcocy)}%
      \pgfmathsetmacro{\fcvay}{\fcfx-\fcocx}%
      \pgfmathsetmacro{\fcvanga}{atan2(\fcoay-\fcocy,\fcoax-\fcocx)}%
      \pgfmathsetmacro{\fcvangb}{atan2(\fcoby-\fcocy,\fcobx-\fcocx)}%
      \pgfmathsetmacro{\fcvdiff}{\fcvangb-\fcvanga}%
      \ifdim\fcvdiff pt > 180pt\relax
        \pgfmathsetmacro{\fcvdiff}{\fcvdiff-360}%
      \fi
      \ifdim\fcvdiff pt < -180pt\relax
        \pgfmathsetmacro{\fcvdiff}{\fcvdiff+360}%
      \fi
      \ifdim\fcvdiff pt < 0pt\relax
        \pgfmathsetmacro{\fcvax}{-\fcvax}%
        \pgfmathsetmacro{\fcvay}{-\fcvay}%
      \fi
    \fi
    \fcRightAngleMark{\fcfx}{\fcfy}{\fcuax}{\fcuay}{\fcvax}{\fcvay}{\fcRightAngleSize}%
    \ifnum\fctanti=1
      \pgfmathsetmacro{\fcvbx}{\fctbx-\fctax}%
      \pgfmathsetmacro{\fcvby}{\fctby-\fctay}%
    \else
      \pgfmathsetmacro{\fcvbx}{-(\fcgy-\fctcy)}%
      \pgfmathsetmacro{\fcvby}{\fcgx-\fctcx}%
      \pgfmathsetmacro{\fcvanga}{atan2(\fctay-\fctcy,\fctax-\fctcx)}%
      \pgfmathsetmacro{\fcvangb}{atan2(\fctby-\fctcy,\fctbx-\fctcx)}%
      \pgfmathsetmacro{\fcvdiff}{\fcvangb-\fcvanga}%
      \ifdim\fcvdiff pt > 180pt\relax
        \pgfmathsetmacro{\fcvdiff}{\fcvdiff-360}%
      \fi
      \ifdim\fcvdiff pt < -180pt\relax
        \pgfmathsetmacro{\fcvdiff}{\fcvdiff+360}%
      \fi
      \ifdim\fcvdiff pt < 0pt\relax
        \pgfmathsetmacro{\fcvbx}{-\fcvbx}%
        \pgfmathsetmacro{\fcvby}{-\fcvby}%
      \fi
    \fi
    \fcRightAngleMark{\fcgx}{\fcgy}{\fcubx}{\fcuby}{\fcvbx}{\fcvby}{\fcRightAngleSize}%
  \fi
  \endgroup
}

\AtEndDocument{%
	\par
	\medskip
	\begin{tabular}{@{}l@{}}%
		{School of Mathematics and Statistics, The Open University,}\\
		{Milton Keynes, MK7 6AA, United Kingdom}\\
		\textit{E-mail address}: \texttt{ian.short@open.ac.uk}\\
		\textit{E-mail address}: \texttt{andrei.zabolotskii@open.ac.uk}
\end{tabular}}

\title{ All Y-friezes come from \SL2-friezes}
\author{Ian Short and Andrei Zabolotskii}
\date{}
\begin{document}

\maketitle

\begin{abstract}
We resolve a conjecture of de Saint Germain: all Y-friezes arise from \SL2-friezes. Additionally, we give a threefold characterisation of those \SL2-friezes that give rise to the same Y-frieze: an elementary characterisation using quiddities, a combinatorial characterisation using triangulated polygons, and a geometrical characterisation using chains of horocycles.
\end{abstract}

\section{Introduction}
\label{sec:intro}

An \SL2-frieze is a staggered array of integers bounded by $1$'s in which every diamond of entries
\begin{tikzpicture}[baseline=-.5ex,scale=0.3]
  \draw[gray!30] (0,1) -- (1,0) -- (0,-1) -- (-1,0) -- cycle;
  \node at (0,1) {\scriptsize $N$};
  \node at (0,-1) {\scriptsize $S$};
  \node at (-1,0) {\scriptsize $W$};
  \node at (1,0) {\scriptsize $E$};
\end{tikzpicture}
satisfies the \emph{SL$_{\text{\textit 2}}$-diamond rule}
\[
EW - NS = 1.
\]
A Y-frieze is a staggered array bounded by $0$'s in which every diamond satisfies the \emph{Y-diamond rule}
\[
EW = (N+1)(S+1).
\]

In the 1970s, Coxeter \cite{Co1971} and Conway introduced frieze patterns -- which we refer to as \SL2-friezes to contrast them to their Y counterparts -- and established a correspondence between \SL2-friezes and triangulated polygons.
In \cite{MoOvTa2015}, Morier-Genoud, Ovsienko, and Tabachnikov used the Farey graph $\mathscr{F}$ to interpret that correspondence geometrically. We paraphrase their result as follows.

\begin{maintheorem}{A}\label{theoremA}
There is a one-to-one correspondence
\[
\textnormal{SL}_2(\mathbb{Z})\Big\backslash\mleft\{\parbox{4.6cm}{\centering\textnormal{simple closed clockwise paths in $\mathscr{F}$ of length $n+3$}}\mright\}\quad \longrightarrow\quad \mleft\{\parbox{1.9cm}{\centering\textnormal{$\text{SL}_2$-friezes of width $n$}}\mright\}
\]
determined by 
\[
A_{i,j} = a_jb_i-b_ja_i,\qquad\text{for $1\leq i-j\leq n+2$},
\]
where $a_i/b_i$ is a simple closed clockwise path in $\mathscr{F}$ satisfying $a_{i-1}b_i-b_{i-1}a_i=1$, for $i\in\mathbb{Z}$.
\end{maintheorem}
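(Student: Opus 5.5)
Fix a simple closed clockwise path $v_i=a_i/b_i$ in $\mathscr{F}$ of length $n+3$, i.e.\ $v_{i+n+3}=v_i$ with consecutive vertices Farey neighbours, and lift it to integer vectors $(a_i,b_i)$ satisfying $a_{i-1}b_i-b_{i-1}a_i=1$. The normalisation is possible because a Farey edge has determinant $\pm1$, and clockwise orientation fixes the sign. The lift is unique up to replacing every $(a_i,b_i)$ by $(-a_i,-b_i)$, which does not change $A_{i,j}$.

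First I check that the array $A_{i,j}$ is an \SL2-frieze. The diamond rule follows from the Plücker relation for $2\times2$ determinants:
\[
[x,y][z,w]=[x,z][y,w]-[x,w][y,z],\qquad [x,y]=x_1y_2-x_2y_1 .
\]
Applied to $v_{j-1},v_j,v_i,v_{i+1}$ it gives exactly $A_{i,j}A_{i+1,j-1}-A_{i+1,j}A_{i,j-1}=1$, once we set $A_{i,i}=0$ and $A_{i+1,i}=1$. The boundary rows of $1$'s are $A_{i+1,i}=1$ and $A_{j+n+2,j}=1$.

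The second boundary row needs periodicity of the lift. Since the path closes up, $(a_{i+n+3},b_{i+n+3})=\varepsilon_i(a_i,b_i)$ with $\varepsilon_i=\pm1$. The determinant normalisation forces $\varepsilon_i$ to be constant, say $\varepsilon$. For a simple closed clockwise path (a polygon in the Farey graph) one must show $\varepsilon=-1$, i.e.\ the lift is antiperiodic. Then $A_{j+n+2,j}=-[v_{j-1},v_j]\cdot(-1)$ up to the convention, which gives $1$.

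Integrality of the entries is immediate. Positivity of $A_{i,j}$ for $1\le i-j\le n+2$ uses simplicity and the clockwise order: the vertices $v_j,\dots,v_i$ appear in cyclic order around $\partial\mathbb{H}$, so the determinants of the lifts keep a fixed sign. This is where Coxeter's definition of frieze (with positive entries) enters. Both this step and $\varepsilon=-1$ follow from the fact that the lift of the path winds exactly once around the origin. I regard this winding argument as the main obstacle. I would try to prove it by induction on $n$, removing an ear of the polygon: a vertex $v_i$ whose neighbours $v_{i-1},v_{i+1}$ are Farey neighbours of each other, so that $v_i$ is a Farey sum of $v_{i-1}$ and $v_{i+1}$. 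The base case is a triangle, with lift $(1,0),(0,1),(-1,-1)$ up to \SL2$(\Z)$ and antiperiodicity $v_3=-v_0$.

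Next, well-definedness on the quotient. Applying $g\in\SL2(\Z)$ to the path applies $g$ to the lift, and determinants are invariant under \SL2, so $A$ depends only on the orbit.

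Injectivity: if two paths give the same frieze, then $A_{i+1,i-1}=[v_{i-1},v_{i+1}]$ agree, so the quiddity sequences $c_i$ agree. These satisfy $v_{i+1}=c_iv_i-v_{i-1}$, which is Plücker again. Move both paths by \SL2$(\Z)$ so that $(v_0,v_1)=((1,0),(0,1))$, possible since the determinant is $1$. The recursion then determines both paths, so they coincide.

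Surjectivity: given a frieze with quiddity $c_i$, define $v_0=(1,0)$, $v_1=(0,1)$ and $v_{i+1}=c_iv_i-v_{i-1}$. Every consecutive determinant is $1$, so this is a path in $\mathscr{F}$. The entries $A_{i,j}=[v_j,v_i]$ satisfy the same recurrences as the frieze and share its initial rows, so they equal the frieze entries. Periodicity of the frieze, with zeros bounding it ($A_{j+n+3,j}=0$), gives $v_{j+n+3}\parallel v_j$, so the path closes up with length $n+3$. Positivity of the frieze entries gives the clockwise cyclic order, hence simplicity. This reverses the winding argument above, and I would handle both directions with the same lemma.
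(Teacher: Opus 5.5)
The paper does not prove Theorem~\ref{theoremA}; it quotes it from Morier-Genoud, Ovsienko and Tabachnikov \cite{MoOvTa2015}, building on Coxeter and Conway. Your outline is the standard argument behind that result, and its algebraic parts are sound:
\begin{itemize}
\item the Pl\"ucker relation gives the diamond rule;
\item the identity $v_{i-1}+v_{i+1}=c_iv_i$ holds, being the case $[x,y]z+[y,z]x+[z,x]y=0$;
\item injectivity follows by normalising $(v_0,v_1)$;
\item for surjectivity, the diamond rule at the bottom row gives $[v_j,v_{j+n+3}]=0$, the bottom row of $1$'s then forces $v_{j+n+3}=-v_j$, and positivity of all $[v_j,v_i]$, $0\le j<i\le n+2$, makes the vertices distinct and cyclically ordered.
\end{itemize}
There is one slip. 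With $(x,y,z,w)=(v_{j-1},v_j,v_i,v_{i+1})$ the relation gives $A_{i,j-1}A_{i+1,j}-A_{i,j}A_{i+1,j-1}=1$, which is $EW-NS=1$. The identity you display has the opposite sign.

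The genuine gap is the one step where ``clockwise'' is used: antiperiodicity and positivity. Your sketch goes wrong exactly there.

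\emph{The base case is wrong.} The lift $(1,0),(0,1),(-1,-1)$ is the triangle $\infty,0,1$, which is counterclockwise. Indeed $[(-1,-1),(1,0)]=1$, so the normalised lift continues with $v_3=+v_0$, and $A_{2,0}=[v_0,v_2]=-1$. The clockwise triangle is $(1,0),(0,1),(-1,1)$, which is what your own recursion produces from $c_1=c_2=1$. Relatedly, orientation plays no role in choosing the normalised lift; every path has one, unique up to $\pm$. It enters only through $\varepsilon$ and the signs of non-adjacent determinants.

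\emph{The ear induction does not avoid the orientation argument.} To get an ear you need that the region bounded by the path is a union of $n+1$ Farey triangles. At each inductive step you must also show $[v_{i-1},v_{i+1}]=+1$ rather than $-1$, which is again an orientation statement.

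\emph{A direct argument suffices.}
\begin{enumerate}
\item Write $v_i=r_i(\cos\phi_i,\sin\phi_i)$. In the paper's stereographic convention $a/b$ sits at $(\sin2\phi,\cos2\phi)$, so clockwise means increasing $\phi$. For example, $\infty,1,0$ lifts to $(1,0),(1,1),(0,1)$.
\item Since $[v_{i-1},v_i]=1>0$, you may take $\phi_i-\phi_{i-1}\in(0,\pi)$. This is then the length, in $\mathbb{R}/\pi\mathbb{Z}$, of the clockwise arc from $a_{i-1}/b_{i-1}$ to $a_i/b_i$.
\item Because the path is simple and clockwise, these arcs contain no other vertices and tile the circle exactly once per period. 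Hence $\phi_{i+n+3}-\phi_i=\pi$, which gives $v_{i+n+3}=-v_i$.
\item For $1\le i-j\le n+2$ we have $0<\phi_i-\phi_j<\pi$, so $[v_j,v_i]=r_ir_j\sin(\phi_i-\phi_j)>0$.
\end{enumerate}
Read backwards, the same picture is your surjectivity step.
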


Here $A_{i,j} $ is the $(i,j)$th entry of the $\text{SL}_2$-frieze $A$ and $(a_0/b_0, a_1/b_1,\dots,a_{n+3}/b_{n+3})$, where $(a_{n+3},b_{n+3})=-(a_0,b_0)$, is a clockwise sequence of reduced rationals that completes exactly one simple cycle, with the clockwise order on the extended real line inherited from the unit circle by stereographic projection. The condition $a_{i-1}b_i-b_{i-1}a_i=1$ merely ensures that we choose the sign of the numerator and denominator of $a_i/b_i$ judiciously (since $a/b$ and $(-a)/(-b)$ represent the same rational). The path can be extended to a biinfinite periodic path by defining $(a_{i+n+3},b_{i+n+3})=-(a_i,b_i)$, for $i\in\mathbb{Z}$. It encloses a triangulated polygon in $\mathscr{F}$; for example, the path of length 7 shown in Figure~\ref{figure1} encloses a triangulated heptagon.

\begin{figure}[ht!]
\centering
\begin{tikzpicture}[scale=3]

\discfarey

\def\vertices{{{1,0},{1,1},{2,3},{1,2},{2,5},{1,3},{0,1},{1,0}}}

\foreach \i in {0,...,6} {
    \pgfmathsetmacro{\numA}{\vertices[\i][0]}
    \pgfmathsetmacro{\denA}{\vertices[\i][1]}

    \pgfmathtruncatemacro{\nexti}{\i+1}
    \pgfmathsetmacro{\numB}{\vertices[\nexti][0]}
    \pgfmathsetmacro{\denB}{\vertices[\nexti][1]}

     \shline[ultra thick](\numA:\denA:\numB:\denB); 
}

\fareysegment[blue]{1:1:1:0}{1:2:2:5}{\footnotesize\color{blue}{$B_{14}$}}

\end{tikzpicture}
\caption{Simple closed Farey path}
\label{figure1}
\end{figure}

The \emph{quiddity} of the path is the sequence $\sigma_i=a_{i-1}b_{i+1}-b_{i-1}a_{i+1}$, for $i\in\mathbb{Z}$. The positive integer $\sigma_i$ is equal to the number of inner triangles in the triangulated polygon incident to the $i$th vertex in the cycle. The quiddity is invariant under the action of $\text{SL}_2(\mathbb{Z})$ on pairs $\begin{psmallmatrix}a_i\\b_i\end{psmallmatrix}$ by left multiplication. Likewise, the \SL2-frieze $A_{i,j}$ is unchanged by replacing $a_i/b_i$ with an $\SL2(\Z)$-equivalent path.

The entries in an \SL2-frieze have several other interpretations; in particular, they are ``shadows'' of cluster variables in cluster algebras of type $A_n$. Y-frieze patterns were introduced by de Saint Germain in \cite{dSG2023} as ``shadows'' of a subset of Y-variables from the same cluster algebras; we refer to them as Y-friezes for brevity, even though this term has a different meaning in \cite{dSG2023}.

The following theorem relates Y-friezes to \SL2-friezes.

\begin{maintheorem}{B}\label{theorem:surj}
There is a surjective map
\[
p_n\colon\,
\mleft\{\parbox{1.9cm}{\centering\textnormal{$\text{SL}_2$-friezes of width $n$}}\mright\}
\quad \longrightarrow\quad
\mleft\{\parbox{1.9cm}{\centering\textnormal{Y-friezes of~width $n$}}\mright\}
\]
determined by
\[
p_n(A)_{i,j} = A_{i,j+1}A_{i+1,j}.
\]

At most two \SL2-friezes lie in the same fibre of $p_n$.
Specifically, two \SL2-friezes with distinct quiddities $\sigma_i$ and $\sigma_i'$ map to the same Y-frieze if and only if $n$ is odd and $\sigma'_o=\lambda\sigma_o$ and $\sigma_e=\lambda\sigma'_e$, for all odd and even indices $o$ and $e$, where $\lambda$ is $\tfrac13$, $\tfrac12$, 2, or 3.
\end{maintheorem}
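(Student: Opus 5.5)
The plan has four parts: check that $p_n$ lands in Y-friezes, reduce everything to the quiddity row, construct preimages, and then analyse the fibres.

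\emph{Well-definedness.} Using Theorem~\ref{theoremA}, write $A_{i,j}=a_jb_i-b_ja_i$ for a Farey path $a_i/b_i$. The Plücker relation
\[
A_{i,j}A_{k,l}=A_{i,l}A_{k,j}+A_{i,k}A_{j,l}
\]
for four vectors $\begin{psmallmatrix}a\\b\end{psmallmatrix}$ specialises to the \SL2-diamond rule. In particular it gives
\[
p_n(A)_{i,j}+1=A_{i,j+1}A_{i+1,j}+1=A_{i,j}A_{i+1,j+1}.
\]
Now take a diamond of $p_n(A)$. Both $EW$ and $(N+1)(S+1)$ expand into a product of four entries of $A$, and these two products coincide after reindexing, which is a direct check. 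The boundary rows of $1$'s and the implicit rows of $0$'s of $A$ send the border of $p_n(A)$ to $0$'s. The entries are positive integers, and periodicity and width are inherited from $A$.

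\emph{Reduction to quiddities.} Like an \SL2-frieze, a Y-frieze is determined by its first nontrivial row, because the diamond rule $S=EW/(N+1)-1$ propagates downwards. By the identity above, the first row of $p_n(A)$ is
\[
y_i+1=\sigma_i\sigma_{i+1}\quad\text{(up to a fixed index shift)},
\]
since the row of $A$ adjacent to the $1$'s is the quiddity. So two \SL2-friezes have the same image if and only if $\sigma_i\sigma_{i+1}=\sigma'_i\sigma'_{i+1}$ for all $i$. With $c_i=y_i+1$, such a sequence is determined by $\sigma_0$ through $\sigma_{i+1}=c_i/\sigma_i$, and the period is $n+3$.
\begin{itemize}
\item If $n$ is even, $n+3$ is odd, and going once round the period forces $\sigma_0^2=\prod c_{\mathrm{even}}/\prod c_{\mathrm{odd}}$ (suitably indexed). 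Hence $\sigma$ is unique and the fibre has at most one element.
\item If $n$ is odd, the solutions are exactly $\sigma'_o=\lambda\sigma_o$ and $\sigma'_e=\sigma_e/\lambda$ for some $\lambda>0$. This yields the stated form of the fibres once the admissible values of $\lambda$ are found.
\end{itemize}

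\emph{Surjectivity.} This is the main obstacle, because one must show that some solution $\sigma$ is a genuine quiddity: positive integers whose matrix product $\prod\begin{psmallmatrix}\sigma_i&-1\\1&0\end{psmallmatrix}$ equals $-I$. I would argue by induction on $n$ via ear removal. First, find in any Y-frieze a local configuration playing the role of a vertex with $\sigma_i=1$; I expect this to be a position where the neighbouring entries satisfy $c_{i-1}\mid c_i$ or a similar divisibility/minimality condition. I would locate it by a Conway–Coxeter style argument: take a minimal entry and use positivity of the rows below it. Second, show that deleting the corresponding diagonal and adjusting neighbours (the Y-analogue of $\sigma_{i\pm1}\mapsto\sigma_{i\pm1}-1$) produces a Y-frieze of width $n-1$. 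Third, lift by the induction hypothesis and re-insert the ear on the \SL2 side. The new first-row relations must be checked to match, which is a short computation using the Y-diamond rule.

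\emph{Fibres.} Suppose $n$ is odd, $\lambda\ne1$, and by symmetry $\lambda>1$. Every triangulated polygon has ears, that is, entries $\sigma=1$. Since $\sigma'_o=\lambda\sigma_o>1$, all ears of $\sigma'$ lie at even indices. Since $\sigma_e=\lambda\sigma'_e>1$, all ears of $\sigma$ lie at odd indices. Both quiddities sum to $3n+3$, which gives $(\lambda-1)(\lambda S_o-S_e)=0$, where $S_o$ and $S_e$ are the odd and even partial sums of $\sigma$. Hence $S_e=\lambda S_o$. Integrality at an ear of $\sigma'$, where $\sigma_e=\lambda$, shows $\lambda\in\mathbb Z$. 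To bound $\lambda\le3$, cut the triangulation along a diagonal: an ear at an odd position forces its two even neighbours to share a triangle, and comparing the local quiddities of $\sigma$ and $\sigma'$ near corresponding ears should give $\lambda\le3$. For the converse, I would exhibit examples with $\lambda=2$ and $\lambda=3$, for instance $n=1$ with quiddities $(1,2,1,2)$ and $(2,1,2,1)$. I would then check that any $\lambda\in\{2,3\}$ satisfying the parity conditions does produce a valid quiddity, by verifying the matrix product directly or through the ear-removal induction above.
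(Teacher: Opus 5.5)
Your well-definedness check and your reduction to the first row are correct. The observation that $p_n(A)=p_n(A')$ if and only if $\sigma_i\sigma_{i+1}=\sigma'_i\sigma'_{i+1}$ for all $i$ cleanly proves that fibres are rescaling classes; the paper only calls this step straightforward.

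\textbf{Surjectivity.} This is the heart of the theorem, and here there is a genuine gap. Your ear-removal induction is a hope rather than an argument, and none of its three steps is established.
\begin{itemize}
\item There is no intrinsic ``ear'' in $B$ when $n$ is odd. For $c=(2,2,2,2)$ the two preimages $(1,2,1,2)$ and $(2,1,2,1)$ have their $1$'s in opposite parity classes.
\item A Conway--Coxeter argument that some $\sigma_i=1$ presupposes that $\sigma$ is integral, which is exactly what must be proved.
\item The reduction step is not a short computation. The new first row contains quotients such as $c_{i+1}-c_{i+1}/c_i$. Showing that the entire reduced array is a Y-frieze with positive integer entries means controlling products such as $A_{k,l+2}A_{k+1,l}$, which are not entries of $B$.
\end{itemize}
Your identification of the obstacle is also partly off. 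Only integrality is really at issue: closure (the matrix product being $-I$) falls out of the reconstruction below, because the bottom row of $0$'s in $B$ forces a row of $1$'s.

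\textbf{The missing idea.} Rebuild $A$ from $B$ column by column rather than from the quiddity.
\begin{itemize}
\item Put $x_{i,k}=B_{k+i,k-i}$, $y_{i,k}=B_{k+i+1,k-i}$, $X_{0,k}=1$ and $X_{i,k}=x_{i,k}/X_{i-1,k}$.
\item By induction, the Y-diamond rule gives $y_{i,k}+1=X_{i,k}X_{i,k+1}$.
\item Substituting this into the diamond $(x_{i+1,k}+1)(x_{i,k}+1)=y_{i,k}y_{i,k-1}$ exhibits $x_{i+1,k}$ as $X_{i,k}$ times an integer. Hence every $X_{i,k}$ is a positive integer.
\item These $X_{i,k}$ are the odd rows of $A$. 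For $n$ even, the glide reflection supplies the even rows, quiddity included.
\item For $n$ odd, the quiddity $b_k$ is a priori only rational. The Ptolemy relation gives $b_ub_v\in\mathbb{Z}$ whenever $u-v$ is odd. Rescaling by the ratio of the least common denominators of the even- and odd-indexed $b_k$ then clears all denominators.
\end{itemize}

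\textbf{Fibres.} Here the gap is smaller, but ``comparing local quiddities should give $\lambda\le 3$'' is not a proof. What you need is the standard fact that every quiddity of positive width contains $(1,2)$, $(2,1)$ or $(1,3,1)$. In your setup the $1$'s of $\sigma$ sit at odd positions and the even entries are divisible by $\lambda$, so this forces $\lambda\in\{2,3\}$. Moreover, a single occurrence of such a substring in a fixed $\sigma$ already pins down the only possible nontrivial factor, so each fibre has at most two elements. The closing examples and the ``converse'' verification are unnecessary: the ``if'' direction is immediate from $\sigma_i\sigma_{i+1}=\sigma'_i\sigma'_{i+1}$.
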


That $p_n$ is well-defined follows (essentially) from \cite[Theorem~A]{dSG2023}.
It is also noted in \cite{dSG2023} that $p_n$ is not injective when $n$ is odd. The most original part of Theorem~\ref{theorem:surj} is the surjectivity assertion. This assertion resolves a conjecture of  de Saint Germain first stated in \cite[Section 1.1]{dSG2023} and then restated in \cite[Conjecture 10]{dSG2025}.

An independent and different proof of Theorem~\ref{theorem:surj} has been obtained by Hin Chung Henry Tsang and Jonathan Wilson (in preparation).

By combining Theorems \ref{theoremA} and \ref{theorem:surj}, we can deduce a similar theorem to Theorem~\ref{theoremA} involving the Farey graph $\mathscr{F}$ for Y-friezes.

\begin{maintheorem}{C}\label{theoremC}
There is a surjective map
\[
\textnormal{SL}_2(\mathbb{Z})\Big\backslash\mleft\{\parbox{4.6cm}{\centering\textnormal{simple closed clockwise paths in $\mathscr{F}$ of length $n+3$}}\mright\}\quad \longrightarrow\quad \mleft\{\parbox{1.9cm}{\centering\textnormal{Y-friezes of~width $n$}}\mright\}
\]
determined by 
\[
B_{i,j} = \left[\tfrac{a_{i}}{b_{i}},\tfrac{a_{i+1}}{b_{i+1}},\tfrac{a_{j}}{b_{j}},\tfrac{a_{j+1}}{b_{j+1}}\right],\qquad\text{for $1\leq i-j\leq n+2$},
\]
where $a_i/b_i$ is a simple closed clockwise path in $\mathscr{F}$  satisfying $a_{i-1}b_i-b_{i-1}a_i=1$, for $i\in\mathbb{Z}$.

\end{maintheorem}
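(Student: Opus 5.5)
The plan is to obtain Theorem~\ref{theoremC} by composing the bijection of Theorem~\ref{theoremA} with the surjection $p_n$ of Theorem~\ref{theorem:surj}. The composite of a bijection and a surjection is a surjection
\[
\textnormal{SL}_2(\mathbb{Z})\backslash\{\text{simple closed clockwise paths of length } n+3\}\longrightarrow\{\text{Y-friezes of width } n\}.
\]
All that then remains is to show that this composite is given by the stated cross-ratio formula. Note that well-definedness on $\SL2(\Z)$-orbits is automatic from the composition. It is also visible directly, because the cross ratio is invariant under Möbius transformations, and in particular under the action of $\SL2(\Z)$ on $\mathbb{Q}\cup\{\infty\}$.

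For the formula, let $a_i/b_i$ be a path normalised by $a_{i-1}b_i-b_{i-1}a_i=1$. Write $D(k,l)=a_kb_l-b_ka_l$, so that Theorem~\ref{theoremA} gives $A_{i,j}=D(j,i)$. Then, by the definition of $p_n$,
\[
p_n(A)_{i,j}=A_{i,j+1}A_{i+1,j}=D(j+1,i)\,D(j,i+1).
\]
I will use the cross ratio in homogeneous form. With the convention fixed by the paper (of the shape $[x_1,x_2,x_3,x_4]=\frac{(x_1-x_4)(x_2-x_3)}{(x_1-x_2)(x_3-x_4)}$ or an equivalent permutation), substituting $x_k=a_k/b_k$ clears all denominators $b_k$. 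This gives
\[
\left[\tfrac{a_i}{b_i},\tfrac{a_{i+1}}{b_{i+1}},\tfrac{a_j}{b_j},\tfrac{a_{j+1}}{b_{j+1}}\right]=\frac{D(i,j+1)\,D(i+1,j)}{D(i,i+1)\,D(j,j+1)}.
\]
This expression is invariant under $(a_k,b_k)\mapsto-(a_k,b_k)$, so it also handles the periodic extension $(a_{k+n+3},b_{k+n+3})=-(a_k,b_k)$.

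The normalisation gives $D(i,i+1)=D(j,j+1)=1$. In the numerator, $D(i,j+1)D(i+1,j)=D(j+1,i)D(j,i+1)$, since each factor changes sign under swapping its arguments. Hence the cross ratio equals $A_{i,j+1}A_{i+1,j}=B_{i,j}$ for $1\le i-j\le n+2$, which is exactly the claimed formula.

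The main obstacle is purely one of conventions. I must match the paper's precise definition of the cross ratio, and the index conventions of $A_{i,j}$ versus $D$, so that the signs come out positive rather than yielding, say, $1-B_{i,j}$ or $B_{i,j}^{-1}$. If the paper's cross ratio is a different permutation, I would first check it against a small case, such as the triangle $n=0$ or the quadrilateral $n=1$, where both sides can be computed directly. I would then adjust the ordering of the factors accordingly; the rest of the argument is unchanged.
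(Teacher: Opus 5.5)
Your proposal is correct and is essentially the paper's argument. The paper simply observes that $B_{i,j}=(a_ib_{j+1}-a_{j+1}b_i)(a_{i+1}b_j-a_jb_{i+1})$, so the map is the composite of the bijection of Theorem~\ref{theoremA} with the surjection $p_n$ of Theorem~\ref{theorem:surj}; moreover, the paper's convention $[p,q,r,s]=\frac{p-s}{p-q}\frac{q-r}{r-s}$ is exactly the one you guessed, so your homogeneous computation goes through with no sign or permutation adjustment.
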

Here $B_{i,j}$ is the $(i,j)$th entry of the Y-frieze $B$ and, as before, $a_i/b_i$ are the vertices of a simple closed clockwise path in $\mathscr{F}$ (which we also think of as a biinfinite periodic path), and 
\[
[p,q,r,s] = \frac{p-s}{p-q}\frac{q-r}{r-s}.
\]
is the cross-ratio of four points $p,q,r,s$ on the extended real line.
Notice that $B_{i,j} = (a_ib_{j+1}-a_{j+1}b_i)(a_{i+1}b_j-a_jb_{i+1})$; that is,
the map defined in Theorem \ref{theoremC} is the composition of the maps from Theorems \ref{theoremA} and \ref{theorem:surj}, so it is surjective.

We introduce Y-friezes and \SL2-friezes formally in Section~\ref{sec:defs}. In Section~\ref{sec:proof}, we prove the surjectivity claim of Theorem~\ref{theorem:surj}. In Section~\ref{sec:noninj}, we give a combinatorial criterion for non-injectivity of $p_n$ and prove the rest of Theorem~\ref{theorem:surj}. In Section~\ref{sec:hyper} we explore the geometry behind Y-friezes.

\section{$\textbf{SL}_\textbf{2}$-friezes and Y-friezes}
\label{sec:defs}

We assume throughout this section and beyond that $n$ is a nonnegative integer.

\begin{definition}
Let $a\in\Z$.
A \emph{staggered array} of width $n$ bounded by $a$'s is a map $m\colon \{(i,j)\in\mathbb{Z}\times\mathbb{Z} : 1\leq i-j\leq n+2\}\longrightarrow \mathbb{Z}$ with
\begin{itemize}
\item $m_{i+1,i}=m_{i+n+2,i}=a$, for $i\in\mathbb{Z}$, and
\item $m_{i,j}>0$, for $2\leq i-j\leq n+1$.
\end{itemize}
\end{definition}

A staggered array is usually visualised as $n+2$ staggered infinite rows of integers. For example, a~staggered array bounded by 0's with $n$ odd looks like this:
\[
\hspace*{-45pt}\vcenter{\footnotesize
	\xymatrix @-0.3pc @!0 {
0&&0&&0&&0&&0&&0&&0&&0&&0&&0\\
	& m_{-3,-5} && m_{-2,-4} && m_{-1,-3} && m_{0,-2} && m_{1,-1} && m_{2,0} && m_{3,1} && m_{4,2}&& m_{5,3} & \\
			m_{-3,-6}	&& m_{-2,-5} 	  && m_{-1,-4}	&& m_{0,-3} && m_{1,-2} && m_{2,-1} && m_{3,0} && m_{4,1}  && m_{5,2} && m_{6,3}  \\
		& m_{-2,-6} && m_{-1,-5} && m_{0,-4} && m_{1,-3} && m_{2,-2} && m_{3,-1} && m_{4,0} && m_{5,1}&& m_{6,2} & \\
			m_{-2,-7}	&& m_{-1,-6} 	  && m_{0,-5}	&& m_{1,-4} && m_{2,-3} && m_{3,-2} && m_{4,-1} && m_{5,0}  && m_{6,1} && m_{7,2}  \\
		& m_{-1,-7} && m_{0,-6} && m_{1,-5} && m_{2,-4} && m_{3,-3} && m_{4,-2} && m_{5,-1} && m_{6,0}&& m_{7,1} & \\		
&  &&&&&  && \!\!\!\!\!\!\!\!\ddots &&  && && && \\ 
&m_{n-7,-8}	&& m_{n-6,-7} 	  && m_{n-5,-6}	&& m_{n-4,-5} && m_{n-3,-4} && m_{n-2,-3} && m_{n-1,-2} && m_{n,-1}  && m_{n+1,0} && \\
&&0&&0&&0&&0&&0&&0&&0&&0&&0
	}
}
\]

\begin{definition}
An \emph{\SL2-frieze} $A$ of width $n$ is a staggered array of width $n$ bounded by $1$'s that satisfies the \SL2-diamond rule $A_{i,j}A_{i+1,j+1}-A_{i,j+1}A_{i+1,j} = 1$, for $2\leq i-j\leq n+1$.
\end{definition}

The row below the top row of an \SL2-frieze is called the \emph{quiddity}.
Through Theorem~\ref{theoremA} it agrees with the quiddity of a corresponding path in $\mathscr{F}$.

We will also work with real \SL2-friezes and rational \SL2-friezes, which are defined in the same way as (integral) \SL2-friezes but with positive real or rational entries instead of integers.

\begin{definition}
A \emph{Y-frieze} $B$ of width $n$ is a staggered array of width $n$ bounded by $0$'s that satisfies the Y-diamond rule $(B_{i,j+1}+1)(B_{i+1,j}+1)=B_{i,j}B_{i+1,j+1}$, for $2\leq i-j\leq n+1$.
\end{definition}

\begin{remark}
In our layout for \SL2-friezes and Y-friezes, both coordinate axes associated with the indices $i,j$ are directed along diagonals (as in \cite{Mo2015}), not along rows (as in \cite{dSG2025}). Our notion of ``width'', the number of nontrivial rows $n$, agrees with e.g.\ \cite{dSG2023} but differs from e.g.\ \cite{ShVaZa2025}, where ``width'' is $N=n+3$.
\end{remark}

Each row of an \SL2-frieze or Y-frieze is periodic with (not necessarily minimum) period $N=n+3$; moreover, both \SL2-friezes \cite{Co1971} and Y-friezes \cite[Theorem~B]{dSG2023} possess a glide reflection symmetry $m_{i,j}=m_{j+N,i}$, for $1\leqslant i-j\leqslant n+2$.

We recall the map $p_n$ from Theorem \ref{theorem:surj} (also \cite[Theorem 8]{dSG2025}), which sends \SL2-friezes to Y-friezes according to the rule
\[
B_{i,j} = A_{i,j+1}A_{i+1,j},
\]
where $B=p_n(A)$.
In other words, each element of the staggered array $B$ is obtained as the product of two vertically adjacent elements in $A$. The array $B$ is indeed a Y-frieze, because the Y-diamond rule for $B$ follows from the \SL2-diamond rule for $A$:
\[
\begin{aligned}
&(B_{i,j+1}+1)(B_{i+1,j}+1) \\&= (A_{i,j+2}A_{i+1,j+1}+1)\times(A_{i+1,j+1}A_{i+2,j}+1)  \\& = A_{i,j+1}A_{i+1,j+2}\times A_{i+1,j}A_{i+2,j+1} \\&= A_{i,j+1}A_{i+1,j} \times A_{i+1,j+2}A_{i+2,j+1}   \\ &= B_{i,j}B_{i+1,j+1}.
\end{aligned}
\]
The second row of the Y-frieze $B$ is 
\[
B_{i,i-2}=(a_{i-1}b_{i}-b_{i-1}a_{i})(a_{i-2}b_{i+1}-b_{i-2}a_{i+1})=a_{i-2}b_{i+1}-b_{i-2}a_{i+1}=A_{i+1,i-2}.
\]
This is the third row of the \SL2-frieze $A$, directly below the quiddity.

In the following examples we display entries of \SL2-friezes in black and entries of Y-friezes in grey.
\begin{example}
\label{ex:basic}

This is an example of an \SL2-frieze $A$ (left) and a Y-frieze $B=p_n(A)$ (right) with $n=3$. All rows are 6-periodic.

\(
\small
\xymatrix @-1pc @!0{
&&1&&1&&1&&1&&1&&1&&1 \\
&1&&2&&2&&2&&1&&4&&1 \\
\cdots&&1&&3&&3&&1&&3&&3&&1&\cdots  \\
&2&&1&&4&&1&&2&&2&&2 \\
&&1&&1&&1&&1&&1&&1&&1
}
\qquad
{
\color{gray}
\xymatrix @-1pc @!0{
&0&&0&&0&&0&&0&&0&&0 \\
&&1&&3&&3&&1&&3&&3&&1 \\
\cdots&2&&2&&8&&2&&2&&8&&2&\cdots  \\
&&1&&3&&3&&1&&3&&3&&1 \\
&0&&0&&0&&0&&0&&0&&0
}
}
\)

We can superimpose $A$ and $B$ by placing between each pair of vertically adjacent \SL2-frieze entries their product, which is the entry of the Y-frieze with the same indices $i,j$ as the \SL2-frieze entry to its left.

\(
\small
\xymatrix @-1pc @!0{
&{\color{gray}0}&1&{\color{gray}0}&1&{\color{gray}0}&1&{\color{gray}0}&1&{\color{gray}0}&1&{\color{gray}0}&1&{\color{gray}0}&1 \\
&1&{\color{gray}1}&2&{\color{gray}3}&2&{\color{gray}3}&2&{\color{gray}1}&1&{\color{gray}3}&4&{\color{gray}3}&1&{\color{gray}1}  \\
\cdots&{\color{gray}2}&1&{\color{gray}2}&3&{\color{gray}8}&3&{\color{gray}2}&1&{\color{gray}2}&3&{\color{gray}8}&3&{\color{gray}2}&1&\cdots  \\
&2&{\color{gray}1}&1&{\color{gray}3}&4&{\color{gray}3}&1&{\color{gray}1}&2&{\color{gray}3}&2&{\color{gray}3}&2&{\color{gray}1} \\
&{\color{gray}0}&1&{\color{gray}0}&1&{\color{gray}0}&1&{\color{gray}0}&1&{\color{gray}0}&1&{\color{gray}0}&1&{\color{gray}0}&1
}
\)
\end{example}

\begin{example}
\label{ex:other}
We can adjust the \SL2-frieze $A$ from Example \ref{ex:basic} by multiplying the entries of every \emph{even} row alternately by $2$ and $\tfrac12$ (shaded \frmul{red} and \frdiv{grey} respectively). The resulting array is a different \SL2-frieze $A'$, which has the same image under $p_n$ as $A$; that is $p_n(A') = p_n(A) = B$.

\(
\small
\xymatrix @-1pc @!0{
&{\color{gray}0}&1&{\color{gray}0}&1&{\color{gray}0}&1&{\color{gray}0}&1&{\color{gray}0}&1&{\color{gray}0}&1&{\color{gray}0}&1 \\
&\frmul2&{\color{gray}1}&\frdiv1&{\color{gray}3}&\frmul4&{\color{gray}3}&\frdiv1&{\color{gray}1}&\frmul2&{\color{gray}3}&\frdiv2&{\color{gray}3}&\frmul2&{\color{gray}1}  \\
\cdots&{\color{gray}2}&1&{\color{gray}2}&3&{\color{gray}8}&3&{\color{gray}2}&1&{\color{gray}2}&3&{\color{gray}8}&3&{\color{gray}2}&1&\cdots  \\
&\frdiv1&{\color{gray}1}&\frmul2&{\color{gray}3}&\frdiv2&{\color{gray}3}&\frmul2&{\color{gray}1}&\frdiv1&{\color{gray}3}&\frmul4&{\color{gray}3}&\frdiv1&{\color{gray}1} \\
&{\color{gray}0}&1&{\color{gray}0}&1&{\color{gray}0}&1&{\color{gray}0}&1&{\color{gray}0}&1&{\color{gray}0}&1&{\color{gray}0}&1
}
\)
\end{example}

\section{Proof of the surjectivity conjecture}
\label{sec:proof}

We assume throughout the following sections that $B$ is a Y-frieze of width $n$.

\subsection{Reconstruction by columns yields integers}

Let $k\in\mathbb{Z}$. Let $x_{i,k}=B_{k+i,k-i}$ and $y_{i,k}=B_{k+i+1,k-i}$, for $1\leq i\leq \lceil n/2\rceil$. For fixed $k$, the entries $x_{i,k}$ occupy one of the vertical columns from $B$ without a 0 at the top, and $y_{i,k}$ occupy the vertical column with a 0  at the top to the right of the column of $x_{i,k}$. Let $X_{-1,k}=-1$, $X_{0,k}=1$ and $X_{i,k}=x_{i,k}/X_{i-1,k}$, for $1\leq i\leq\lceil n/2\rceil$.

\begin{lemma}
\label{lemma:x1XX}

We have $y_{i,k}+1=X_{i,k}X_{i,k+1}$, for $0\leq i\leq \lceil n/2\rceil$.
\end{lemma}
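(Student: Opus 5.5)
The plan is induction on $i$, with $k$ arbitrary. Each step is one application of the Y-diamond rule to a single diamond that straddles the two adjacent columns $x_{\cdot,k}$ and $x_{\cdot,k+1}$. The point is that the recursion $X_{i,k}=x_{i,k}/X_{i-1,k}$ is designed so that the products $X_{i,k}X_{i,k+1}$ telescope against the Y-diamond rule.

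\emph{Base case $i=0$.} We have $y_{0,k}=B_{k+1,k}=0$, because $B$ is bounded by $0$'s. Also $X_{0,k}X_{0,k+1}=1\cdot 1=1$. Hence $y_{0,k}+1=1=X_{0,k}X_{0,k+1}$.

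\emph{Inductive step.} Let $1\leq i\leq\lceil n/2\rceil$ and assume the claim for $i-1$ and every $k$. Apply the Y-diamond rule
\[
(B_{i',j'+1}+1)(B_{i'+1,j'}+1)=B_{i',j'}B_{i'+1,j'+1}
\]
with $i'=k+i$ and $j'=k-i$. Reading off the definitions, the four entries are
\[
B_{k+i,k-i+1}=y_{i-1,k},\quad B_{k+i+1,k-i}=y_{i,k},\quad B_{k+i,k-i}=x_{i,k},\quad B_{k+i+1,k-i+1}=x_{i,k+1}.
\]
So the rule reads
\[
(y_{i-1,k}+1)(y_{i,k}+1)=x_{i,k}\,x_{i,k+1}.
\]
This is legitimate provided $2\leq i'-j'=2i\leq n+1$, that is, $1\leq i\leq (n+1)/2$. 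This holds for all $i\leq\lceil n/2\rceil$ whether $n$ is even or odd.

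Now substitute the inductive hypothesis $y_{i-1,k}+1=X_{i-1,k}X_{i-1,k+1}$ and divide. This gives
\[
y_{i,k}+1=\frac{x_{i,k}}{X_{i-1,k}}\cdot\frac{x_{i,k+1}}{X_{i-1,k+1}}=X_{i,k}X_{i,k+1},
\]
which completes the induction.

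\emph{Main obstacle: nonvanishing of the divisors.} The only care needed is to check that the quantities we divide by are nonzero. For $1\leq i\leq\lceil n/2\rceil$ we have $2\leq 2i\leq n+1$, so $x_{i,k}>0$ by the definition of a staggered array. It then follows inductively that each $X_{i,k}$ is a positive rational, since $X_{0,k}=1$. In particular the divisors $X_{i-1,k}$ and $X_{i-1,k+1}$ are nonzero. Index bookkeeping, namely matching the four diamond entries to $x$ and $y$ and confirming the diamond lies in the admissible range, is the only other step needing attention. No integrality is claimed here, so nothing further is required.
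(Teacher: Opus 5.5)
Your proof is correct and matches the paper's argument: induction on $i$, where the inductive step is the single Y-diamond relation $(y_{i-1,k}+1)(y_{i,k}+1)=x_{i,k}\,x_{i,k+1}$ combined with the inductive hypothesis and the definition $X_{i,k}=x_{i,k}/X_{i-1,k}$. Your explicit checks that this diamond lies in the admissible range $2\leq 2i\leq n+1$ and that the divisors are positive are details the paper leaves implicit.
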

\begin{proof}
We proceed by induction on $i$. We have $y_{0,k}+1=X_{0,k}X_{0,k+1}=1$, for $k\in\mathbb{Z}$. Suppose now that $y_{i,k}+1=X_{i,k}X_{i,k+1}$. Then, by the Y-diamond rule,
\[
y_{i+1,k}+1= \frac{x_{i+1,k}x_{i+1,k+1}}{y_{i,k}+1}=\frac{x_{i+1,k}x_{i+1,k+1}}{X_{i,k}X_{i,k+1}}=X_{i+1,k}X_{i+1,k+1},
\]
as required.
\end{proof}

\begin{lemma}\label{lemma2}
The numbers $X_{i,k}$, for $0\leq i\leq  \lceil n/2\rceil$, are positive integers.
\end{lemma}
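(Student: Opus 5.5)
The plan is to argue by induction on $i$, one prime at a time. Positivity is immediate: every $x_{i,k}$ with $1\leq i\leq\lceil n/2\rceil$ is an interior entry of $B$, hence a positive integer, and $X_{0,k}=1$, so each $X_{i,k}=x_{i,k}/X_{i-1,k}$ is a positive rational. The content is integrality. We already know $X_{0,k}=1$ and $X_{1,k}=x_{1,k}$ are integers. So assume that $X_{i-1,k}$ and $X_{i,k}$ are integers for every $k\in\Z$, and aim to show that every $X_{i+1,k}$ is an integer.

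The first step is to collect the integrality constraints available on $X_{i+1,\cdot}$. Fix a prime $p$ and set $v_k=v_p(X_{i+1,k})$. Lemma~\ref{lemma:x1XX} gives $X_{i+1,k}X_{i+1,k+1}=y_{i+1,k}+1\in\Z_{>0}$, so $v_k+v_{k+1}\geq 0$ for all $k$. The definition gives $X_{i+1,k}X_{i,k}=x_{i+1,k}\in\Z$, so $v_k\geq -v_p(X_{i,k})$. Both sequences are $N$-periodic in $k$, by periodicity of $B$.

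The second step is an explicit recurrence. Apply the Y-diamond rule to the diamond with $N=y_{i,k-1}$, $S=y_{i,k}$, $W=x_{i,k}$, $E=x_{i+1,k}$; this gives $y_{i,k-1}y_{i,k}=(x_{i,k}+1)(x_{i+1,k}+1)$. Substituting $y_{i,\cdot}+1=X_{i,\cdot}X_{i,\cdot+1}$ and $x_{i,k}=X_{i,k}X_{i-1,k}$, $x_{i+1,k}=X_{i+1,k}X_{i,k}$, and dividing by $X_{i,k}$, yields the linear relation
\[
X_{i,k}\bigl(X_{i,k-1}X_{i,k+1}-X_{i-1,k}X_{i+1,k}\bigr)=X_{i,k-1}+X_{i,k+1}+X_{i-1,k}+X_{i+1,k}.
\]
Solving for $X_{i+1,k}$ expresses $(x_{i,k}+1)X_{i+1,k}$ as an integer polynomial in $X_{i,k-1},X_{i,k},X_{i,k+1},X_{i-1,k}$. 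Hence $v_p(X_{i+1,k})\geq -v_p(x_{i,k}+1)$. On the other hand, $v_k\geq -v_p(X_{i,k})$, and $X_{i,k}$ divides $x_{i,k}$. Since $x_{i,k}$ and $x_{i,k}+1$ are coprime, at most one of these two lower bounds is negative for a given $p$ and $k$. This is the kind of coprimality I would try to push to $v_k\geq 0$.

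The main obstacle is that these local constraints alone seem to allow an alternating pattern, with $v_k<0$ at one index compensated by $v_{k\pm1}>0$ at its neighbours. This is exactly the $\lambda=2,3$ phenomenon of Example~\ref{ex:other}, where rational rescalings of an \SL2-frieze still map to the same $B$. To exclude it I would use the boundary. Run the same construction from the bottom row of zeros, which by the glide symmetry $B_{i,j}=B_{j+N,i}$ produces the same kind of quantities. Compare the top and bottom reconstructions where the columns meet at $i=\lceil n/2\rceil$; there the column relation closes up and forces $v_k=0$ where the alternating pattern would need $v_k<0$.

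If this direct valuation bookkeeping becomes unwieldy, the fallback is to treat the $X_{i,k}$ as alternate entries of a rational \SL2-frieze interpolating $B$. The first step would be to verify the \SL2-diamond rule for them using the relation above. Integrality would then follow from the Conway--Coxeter continuant description, provided the relevant quiddity-type row is shown to be integral from $x_{1,k}$ and $y_{1,k}$.
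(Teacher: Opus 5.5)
Your second step already proves the lemma, but you stop one line short, so as written the proof does not close. Your two lower bounds hold for the same $v_k$ at once, so
$v_k\geq-\min\{v_p(x_{i,k}+1),\,v_p(X_{i,k})\}$. Since $X_{i,k}\mid x_{i,k}$ (because $X_{i-1,k}\in\Z$) and $\gcd(x_{i,k},x_{i,k}+1)=1$, this minimum is $0$ for every prime $p$. Hence $v_k\geq 0$ and $X_{i+1,k}\in\Z$.

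You do not even need valuations:
\[ X_{i+1,k}=(x_{i,k}+1)X_{i+1,k}-X_{i-1,k}\,x_{i+1,k}, \]
and both terms on the right are integers, by your recurrence and the induction hypothesis. This is exactly the paper's argument. It expands the Y-diamond rule and rearranges it as $x_{i+1,k}=X_{i,k}(X_{i,k+1}X_{i,k-1}X_{i,k}-X_{i,k-1}-X_{i,k+1}-x_{i+1,k}X_{i-1,k}-X_{i-1,k})$, which exhibits $X_{i,k}\mid x_{i+1,k}$ directly. (Your compass labels are swapped relative to $EW=(N+1)(S+1)$, since $x_{i,k},x_{i+1,k}$ form the vertical pair, but your displayed identity is correct.)

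The ``main obstacle'' is a misdiagnosis. An alternating pattern is consistent with the pairwise constraint $v_k+v_{k+1}\geq 0$, but it violates the pointwise bound above. So that constraint, and periodicity, can simply be discarded.

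Example~\ref{ex:other} is no evidence for such a pattern either. Rescaling alters only the entries $A_{i,j}$ with $i-j$ even, namely the $a_{i,k}$, which depend on the free choice of $a_{1,0}$. By contrast, $X_{i,k}=A_{k+i+1,k-i}$ lies in a row with $i-j$ odd and is determined uniquely by $B$ through $X_{i,k}=x_{i,k}/X_{i-1,k}$.

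As written, the boundary comparison is not an argument: you never say which relation ``closes up'' or why it forces $v_k=0$. The fallback is circular relative to the paper, where integrality of a quiddity-type row is deduced from this very lemma. For even $n$ that row is literally $X_{n/2,\cdot}$, by the glide symmetry. For odd $n$ it is not uniquely determined by $B$, and it is made integral only after a rescaling, using this lemma and the Ptolemy relation. Drop your last two paragraphs and finish with the coprimality step.
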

\begin{proof}
We proceed by induction on $i$. The assertion is true for $i=0$. Suppose then that $X_{i,k}$ is a positive integer for some $i\geq 0$ and all $k\in\mathbb{Z}$. By the Y-diamond rule and Lemma~\ref{lemma:x1XX},
\[
(x_{i+1,k}+1)(x_{i,k}+1)=y_{i,k}y_{i,k-1}=(X_{i,k}X_{i,k+1}-1)(X_{i,k-1}X_{i,k}-1).
\]
Now, $x_{i,k}=X_{i,k}X_{i-1,k}$, so 
\[
x_{i+1,k}=X_{i,k}(X_{i,k+1}X_{i,k-1}X_{i,k}-X_{i,k-1}-X_{i,k+1}-x_{i+1,k}X_{i-1,k}-X_{i-1,k}).
\]
Consequently, $X_{i+1,k}=x_{i+1,k}/X_{i,k}$ is a positive integer, as required.
\end{proof}

Notice that $X_{i,k}$ is periodic in $k$ with period $N=n+3$.

\subsection{Injective case: $n$ even}
Suppose $n$ is even. Let $N=n+3$ (a period of $B$), so $N$ is odd. Here we construct a unique \SL2-frieze $A$ such that $p_n(A) = B$.

For odd rows of $A$, we define $A_{k+i+1,k-i}=X_{i,k}$ for $0\leq i\leq n/2$ and $k\in\mathbb{Z}$. The even rows are then determined by the glide reflection symmetry $A_{i,j}=A_{j+N,i}$.

The top and bottom rows of $A$ are all $1$'s and, by Lemma~\ref{lemma2}, all elements of $A$ are positive integers. For the diamond rule, using Lemma~\ref{lemma:x1XX}, we have
\[
\begin{aligned}
&A_{k+i+1,k-i}A_{k+i+2,k+1-i} - A_{k+i+2,k-i}A_{k+i+1,k+1-i} \\ 
&= A_{k+i+1,k-i}A_{k+i+2,k+1-i} - A_{N+k-i,k+i+2}A_{N+k+1-i,k+i+1} \\
&= X_{i,k}X_{i,k+1} - X_{n/2-i,n/2+2+k}X_{n/2+1-i,n/2+2+k}  \\
&=(y_{i,k}+1) - x_{n/2+1-i,n/2+2+k} \\
&= B_{k+i+1,k-i} + 1 - B_{N+k-i,k+i+1} = 1.
\end{aligned}
\]
With a similar calculation we can verify the diamond rule for diamonds centred on even rows. Therefore $A$ is an \SL2-frieze. Next, for even rows we have
\[
A_{i,j+1}A_{i+1,j} = X_{(i-j)/2-1,(i+j)/2}X_{(i-j)/2,(i+j)/2}= x_{(i-j)/2,(i+j)/2} = B_{i,j},\]
and a similar calculation can be performed for odd rows. Hence $p_n(A) = B$, so $p_n$ is surjective.

Finally, we observe that the equation $p_n(A)=B$ implies that $A_{k+i+1,k-1}=X_{i,k}$ (by an induction argument). Consequently, $A$ is uniquely specified by this equation and the glide reflection symmetry, so $p_n$ is injective when $n$ is even.

\subsection{Non-injective case: $n$ odd}

Now suppose that the width $n$ is odd, so $N$ is even.

\begin{definition}
\label{def:rescaling}
Let $A$ be a real \SL2-frieze, and let $\lambda>0$. Then
\[
A'_{i,j} =
\left\{
\begin{array}{rl}
A_{i,j} & \text{if $i-j$ odd,}\\
\lambda^{(-1)^{i+1}}A_{i,j} & \text{if $i-j$ even}
\end{array}
\right.
\]
is also a real \SL2-frieze. We say that $A'$ is the result of \emph{rescaling} $A$ by a factor of $\lambda$.
\end{definition}
This action of positive reals on real \SL2-friezes with an odd number of nontrivial rows is precisely the action from \cite[Theorem 1.2]{ShVaZa2025}.

The quiddities $\sigma_i$ and $\sigma'_i$ of $A$ and $A'$ are related by
\[
\sigma'_i =
\left\{
\begin{array}{rl}
\lambda \sigma_i & \text{if $i$ even,}\\
\lambda^{-1} \sigma_i & \text{if $i$ odd.}
\end{array}
\right.
\]

The \SL2-friezes in Examples \ref{ex:basic} and \ref{ex:other} are related by rescaling by a factor of~2. Any two \SL2-friezes that are related by a rescaling are mapped to the same Y-frieze by $p_n$.

To prove the surjectivity conjecture for odd $n$, we first construct a \emph{rational} \SL2-frieze $A$ satisfying $A_{i,j+1}A_{i+1,j}=B_{i,j}$. We begin by defining the quiddity of $A$: positive rational numbers $a_{1,k}$, for $k\in\mathbb{Z}$, with $a_{1,k-1}a_{1,k}=x_{1,k}+1$ (starting from arbitrary $a_{1,0}$). Then define $a_{r+1,k}=y_{r,k}/a_{r,k}$, for $1\leq r \leq (n-1)/2$. By induction on $r$, we have
\[
a_{r,k-1}a_{r,k}=x_{r,k}+1.
\]
We define $A_{k+i+1,k-i}=X_{i,k}$ and $A_{k+i+1,k-i+1}=a_{i,k}$, for $k\in\mathbb{Z}$. This is a rational $\text{SL}_2$-frieze because
\[
X_{i,k}X_{i,k+1}-a_{i,k}a_{i+1,k}=(y_{i,k}+1)-y_{i,k}=1
\]
and
\[
a_{i,k-1}a_{i,k}-X_{i-1,k}X_{i,k}=(x_{i,k}+1)-x_{i,k}=1.
\]
The rational $\text{SL}_2$-frieze $A$ is periodic with period $N$. As such, it satisfies the Ptolemy relation \cite{Mo2015}
\[
A_{\beta,\alpha}A_{\delta,\gamma}=A_{\gamma,\alpha}A_{\delta,\beta}-A_{\gamma,\beta}A_{\delta,\alpha},
\]
where $\alpha<\beta<\gamma<\delta<\alpha+N$. One can check that $A_{i,j+1}A_{i+1,j}=B_{i,j}$.

\begin{lemma}
The rationals $a_{1,k}$, for $k\in\mathbb{Z}$, can be chosen to be integers.
\end{lemma}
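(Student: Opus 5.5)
The plan is to fix the free parameter $a_{1,0}$ so that the even-indexed quiddity entries become coprime integers, and then to deduce integrality of the odd-indexed ones from a B\'ezout argument.

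\emph{Step 1: the dependence on $a_{1,0}$.} Write $t=a_{1,0}$. The recursion $a_{1,k-1}a_{1,k}=x_{1,k}+1$ gives
\[
a_{1,e}=t\,u_e \quad\text{for $e$ even}, \qquad a_{1,o}=u_o/t \quad\text{for $o$ odd},
\]
where the $u_k$ are positive rationals independent of $t$. Changing $t$ is exactly a rescaling in the sense of Definition~\ref{def:rescaling}. Throughout I use the fact, asserted above, that the rational \SL2-frieze $A$ is $N$-periodic; in particular the finitely many $u_e$ with $e$ even and $0\le e<N$ determine all of them. Choose $t$ so that these finitely many numbers $t u_e$ are integers with $\gcd$ equal to $1$. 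To do this, first clear denominators, then divide out the common factor.

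\emph{Step 2: invariant products are integers.} Entries $A_{i,j}$ with $i-j$ odd are the $X_{i,k}$, which are integers by Lemma~\ref{lemma2}. Entries with $i-j$ even get multiplied by $\lambda^{\pm1}$ under rescaling, with the sign determined by the parity of $i$. I will prove the following claim:
\begin{quote}
for any two entries $A_{i,j}$ and $A_{i',j'}$ with $i-j$ and $i'-j'$ even and with $i\not\equiv i' \pmod 2$, the product $A_{i,j}A_{i',j'}$ is an integer.
\end{quote}

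\emph{Step 3: proof of the claim.} Along a fixed diagonal $j$, the frieze satisfies $A_{i+1,j}=\sigma_i A_{i,j}-A_{i-1,j}$. Applying this twice gives a two-step recurrence
\[
A_{i+2,j}=(\sigma_i\sigma_{i+1}-2)A_{i,j}-A_{i-2,j}.
\]
The coefficient $\sigma_i\sigma_{i+1}=x_{1,\cdot}+1$ is an integer. So even-offset entries on a diagonal are integer linear combinations of two consecutive ones, and those two can be taken near the top of the frieze: the boundary $1$ and a quiddity entry, possibly together with a neighbouring value. The same structure holds for each factor in a product. Since rescaling acts on each factor by $\lambda^{\pm1}$ according to the parity of $i$, an integer recurrence of this kind preserves the parity class. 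This reduces the claim to products of base entries of opposite parity class. Those are products of quiddity entries $a_{1,o}a_{1,e}$, or such entries times $1$, and the case of adjacent $o,e$ holds because $a_{1,o}a_{1,e}=x_{1,\cdot}+1$.

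The real difficulty is non-adjacent $o,e$. For these I would show $a_{1,o}a_{1,e}\in\Z$ by induction on the distance $|o-e|$. The tool is the Ptolemy relation, choosing $\alpha<\beta<\gamma<\delta$ so that the resulting products pair entries of opposite parity class. Alternatively, one can expand $a_{1,o}a_{1,e}$ using the vertical products $B_{i,j}=A_{i,j+1}A_{i+1,j}$, which are integers and pair opposite classes, together with the integer odd-offset entries $X$. I expect this bookkeeping, namely arranging that every product appearing stays rescaling-invariant while the distance decreases, to be the main obstacle.

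\emph{Step 4: conclusion.} Granting that each $a_{1,o}a_{1,e}$ is an integer, choose integers $c_e$ with $\sum_e c_e a_{1,e}=1$; this is possible because the $a_{1,e}=tu_e$ are integers with $\gcd$ $1$. Then for each odd $o$,
\[
a_{1,o}=\sum_e c_e\,(a_{1,o}a_{1,e})\in\Z .
\]
Hence all the $a_{1,k}$ are integers.
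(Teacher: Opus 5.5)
Your B\'ezout endgame is fine; it replaces the paper's rescaling by the ratio $\varepsilon/\omega$ of least common denominators. The gap is that everything hinges on $a_{1,o}a_{1,e}\in\Z$ for odd $o$ and even $e$, and for non-adjacent $o,e$ you do not prove it. You only say you would induct on $|o-e|$ via the Ptolemy relation, and you flag that bookkeeping as the main obstacle. That claim is the whole content of the lemma, so as it stands the proof is incomplete.

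The first half of Step~3 does not help either. Decimating $A_{i+1,j}=\sigma_iA_{i,j}-A_{i-1,j}$ actually gives
\[
\sigma_{i-1}A_{i+2,j}+\sigma_{i+1}A_{i-2,j}=(\sigma_{i-1}\sigma_i\sigma_{i+1}-\sigma_{i-1}-\sigma_{i+1})A_{i,j}.
\]
This agrees with your two-step formula only when $\sigma_{i-1}=\sigma_{i+1}$; yours already fails on the diagonal $1,2,3,4,1$ of Example~\ref{ex:basic}. Its coefficients are also not rescaling-invariant. In any case Step~4 only needs the quiddity products, so that reduction is superfluous.

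The missing idea is a single, well-chosen Ptolemy relation, with no induction. For odd $d$ with $3\le d\le n$, take $(\alpha,\beta,\gamma,\delta)=(k,k+2,k+d,k+d+2)$, so that the two quiddity entries are opposite sides of the quadrilateral:
\[
a_{1,k}a_{1,k+d}=A_{k+2,k}A_{k+d+2,k+d}=A_{k+d,k}A_{k+d+2,k+2}-A_{k+d,k+2}A_{k+d+2,k}.
\]
The four entries on the right have index differences $d,d,d-2,d+2$, all odd. So they are $X$'s (or boundary $1$'s), hence integers by Lemma~\ref{lemma2}. Together with $d=1$ and $d=N-1$, which follow from $a_{1,k-1}a_{1,k}=x_{1,k}+1$ and $N$-periodicity, this covers every odd residue modulo $N$. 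The point you were missing is to choose the quadrilateral so that the remaining terms are odd-offset entries, which are integers outright, rather than products of opposite-class even-offset entries.
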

\begin{proof}
Let $b_k=a_{1,k}$; then $b_{k+N}=b_k$. We first claim that $b_ub_v\in\mathbb{N}$ whenever $d=u-v$ is odd. This is true for $d=N-1$ and $d=1$ because $b_kb_{k+N-1}=b_{k}b_{k-1}=x_{1,k}+1$.  Now take $d\in\{3,5,\dots,n\}$. From the Ptolemy relation we get 
\[
b_kb_{k+d}=A_{k+2,k}A_{k+d+2,k+d}=A_{k+d,k}A_{k+d+2,k+2}-A_{k+d,k+2}A_{k+d+2,k}.
\]
The entries on the right are all integers, by Lemma~\ref{lemma2}, since $d$ is odd. The full claim for $d$ odd follows from periodicity.

This implies that for any $e$ even and $o$ odd, the denominator of $b_e$ written as a reduced fraction divides the numerator of $b_o$ written as a reduced fraction, and therefore the least common denominator $\varepsilon$ of all $b_e$'s divides the numerator of $b_o$; and similarly, the least common denominator $\omega$ of all $b_o$'s divides the numerator of any $b_e$.

Now rescale the rational \SL2-frieze $A$ by a factor of $\varepsilon/\omega$. The resulting quiddity $(a'_{1,k})$ comprises positive integers and defines an \SL2-frieze $A'$ that still satisfies $A'_{i,j+1}A'_{i+1,j}=B_{i,j}$, as required.
\end{proof}

We can assume, then, that the quiddity of $A$ consists of positive integers, in which case all entries of $A$ must be positive integers.
Since $p_n(A)=B$ we deduce that $p_n$ is surjective.

\section{Dissected polygons and non-injectivity}
\label{sec:noninj}
To complete the proof of Theorem~\ref{theorem:surj}, it remains to characterise the non-injectivity of $p_n$ in the case of $n$ odd.

\begin{lemma}
\label{lemma:23}
Any (integral) \SL2-frieze admits at most one rescaling to another (integral) \SL2-frieze, and the factor of any such rescaling is one of $\tfrac13$, $\tfrac12$, $2$, or $3$.
\end{lemma}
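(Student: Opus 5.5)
The plan has three steps: show that the factor $\lambda$ of any such rescaling is an integer or the reciprocal of one, bound that integer by $3$, and then deduce uniqueness from the bound. We work with $n$ odd, so $N=n+3$ is even and, by Theorem~\ref{theoremA}, the quiddities of $A$ and $A'$ are quiddities of triangulated $N$-gons. We use two standard facts about such a quiddity $(\sigma_i)$: its entries sum to $3(N-2)$, and there are at least two ears, that is, indices with $\sigma_i=1$. Let $A'$ be the rescaling of $A$ by $\lambda=p/q$, with $p,q$ coprime positive integers and $\lambda\neq 1$. Then $\sigma'_e=\lambda\sigma_e$ for even $e$ and $\sigma'_o=\lambda^{-1}\sigma_o$ for odd $o$, and both sequences are integral.

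First, $q\mid\sigma_e$ for every even $e$ and $p\mid\sigma_o$ for every odd $o$. If $A$ has an ear at an even index then $q=1$, and if it has an ear at an odd index then $p=1$. Hence $\lambda$ is an integer or the reciprocal of an integer. By the symmetry $A\leftrightarrow A'$ we may take $\lambda=p\geq 2$. In that case every ear of $A$ lies at an even index. Also $\sigma'_e=p\sigma_e\geq 2$, so every ear of $A'$ lies at an odd index $o$, where $\sigma_o=p$. Write $E$ and $O$ for the sums of $\sigma$ over even and odd indices. Equating the two quiddity sums gives $E+O=pE+O/p$, hence $O=pE$.

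Second, and this is the main obstacle, we must bound $p\leq 3$. Naive counting is not enough: $E\geq N/2$, $O\geq pN/2$ and $E+O=3(N-2)$ only give $p\leq 4$. To exclude $p=4$ we would refine the count using the triangulation. Let $t_k$ be the number of triangles with exactly $k$ even vertices. Then $E=t_1+2t_2+3t_3$ and $O=3t_0+2t_1+t_2$. Because vertex parities alternate around the polygon, every boundary edge lies in a mixed-parity triangle, and a triangle contains at most two boundary edges, so $t_1+t_2\geq N/2$. The relation $O=pE$ then reads $3t_0=(p-2)t_1+(2p-1)t_2+3pt_3$. We would combine this with the same bookkeeping for the triangulation of $A'$, where the roles of the parities are swapped and its odd ears sit at vertices $o$ with $\sigma_o=p$. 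We would also use the local structure at an ear $e$ of $A$: its two odd neighbours have $\sigma$ divisible by $p$, and the entry $A_{e+2,e-2}=\sigma_{e-1}\sigma_{e+1}-\sigma_{e-1}-\sigma_{e+1}$ must transform correctly under rescaling. The goal is a contradiction when $p\geq 4$. If counting fails, the fallback is induction on $N$ by cutting off an ear of $A$ at an even index. The difficulty there is that the smaller polygon has odd size, so rescaling does not directly restrict to it, and we would track how the divisibility by $p$ propagates instead.

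Finally, uniqueness follows from the bound. Suppose $A$ admits integral rescalings by two distinct factors $\lambda\neq\mu$, both different from $1$. By the ear argument, if $\lambda>1$ then all ears of $A$ are even, while if $\mu<1$ they are all odd; so $\lambda$ and $\mu$ lie on the same side of $1$, say both in $\{2,3\}$. Rescaling $A'$ by $\mu/\lambda$ gives the other integral frieze, so by the first part $\mu/\lambda$ must lie in $\{\tfrac13,\tfrac12,2,3\}$. This is impossible because $\mu/\lambda\in\{\tfrac23,\tfrac32\}$. The case where both factors lie in $\{\tfrac12,\tfrac13\}$ is symmetric, so at most one such rescaling exists.
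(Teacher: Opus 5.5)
Your first step is correct: an ear at an even index forces $q=1$ and an ear at an odd index forces $p=1$. So after normalising to $\lambda=p\geq 2$, every ear of $A$ sits at an even index and $p$ divides every odd-indexed quiddity entry. Your closing uniqueness argument is also fine once the bound is known.

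The gap is the bound $p\leq 3$ itself. That bound is the real content of the lemma, and you only list possible strategies for it. The counting you set up cannot close it on its own. For $p=4$, your identities together with $t_0+t_1+t_2+t_3=N-2$ reduce to $3(N-2)=5t_1+10t_2+15t_3$. This is compatible with $t_1+t_2\geq N/2$ as soon as $N\geq 12$ (e.g.\ $N=12$, $t_1=6$, $t_0=4$), so some structural input about triangulations is needed. Looking at an arbitrary ear does not help either, since the neighbours of a given ear can have arbitrarily large quiddity entries.

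The missing ingredient is the standard fact the paper uses (Cuntz--Holm, Lemma~7.5): the quiddity of any triangulated $N$-gon with $N\geq 4$ contains a consecutive substring $(1,2)$, $(2,1)$ or $(1,3,1)$. To see this, take the leaf triangle at the end of a longest path in the dual tree. The next triangle on the path either has a boundary edge, which gives a $2$ next to the ear, or is interior with its third neighbour also a leaf, which gives $(1,3,1)$.

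Combined with your first step, the $1$'s in that substring lie at even indices. So the $2$ or $3$ sits at an odd index and is divisible by $p$, forcing $p\in\{2,3\}$.

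In fact the substring alone proves the whole lemma, which is the paper's route. For $(1,2)$ at indices $(i,i+1)$, integrality of $\lambda^{\pm1}\cdot 1$ and $\lambda^{\mp1}\cdot 2$ leaves only $\lambda\in\{1,2^{\pm1}\}$, with the sign fixed by the parity of $i$. Similarly $(1,3,1)$ leaves only $\lambda\in\{1,3^{\pm1}\}$. The nontrivial factor is therefore determined by the frieze, and uniqueness is immediate without the ear-parity argument.
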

\begin{proof}
This observation follows immediately from the well-known and easily established fact (see, for example, \cite[Lemma 7.5]{CuHo2019}) that every quiddity of a positive width \SL2-frieze contains one of the substrings $(1, 2)$, $(2, 1)$, or $(1,3,1)$.
\end{proof}

\begin{proof}[Proof of the remaining part of Theorem~\ref{theorem:surj}]
It is straightforward to verify that any two \SL2-friezes that are mapped to the same Y-frieze by $p_n$ are necessarily related by a rescaling, and the factors of rescaling in two possible directions are mutually reciprocal. By Lemma~\ref{lemma:23}, there are at most two \SL2-friezes in the preimage of a Y-frieze under $p_n$, and if there are two, then one is the rescaling of the other by a factor of 2 or 3, as required.
\end{proof}

\begin{definition}
A triangulated polygon is \emph{2-multiflipative} if it can be partitioned into triangulated quadrilaterals such that the vertices around the triangulated polygon alternate between those that are incident to a diagonal of a quadrilateral and those that are not. A \emph{multiflip} of a 2-multiflipative triangulated polygon is a transformation that acts on the triangulation by preserving the quadrilaterals and replacing the diagonal within each quadrilateral by the other possible diagonal.

A triangulated polygon is \emph{3-multiflipative} if it can be partitioned into triangulated hexagons so that each hexagon has an internal triangle (that is, a triangle formed by three diagonals of the hexagon) and the vertices around the triangulated polygon alternate between those that are incident to a diagonal in a hexagon and those that are not. A \emph{multiflip} acts on this triangulation by preserving the hexagons and replacing the internal triangle within each hexagon by the other possible internal triangle.
See Figure \ref{fig:triangulations}.
\end{definition}

After a multiflip, a $\lambda$-multiflipative triangulated polygon remains as such, and a second multiflip restores the original triangulation.

\begin{figure}[h!]
\centering
\begin{tikzpicture}[scale=0.6]

\begin{scope}[yshift=-0.5]

\draw[thick] (-1,0) rectangle (0,1);   
\draw[thick] (0,0) rectangle (1,1);    
\draw[thick] (1,0) rectangle (2,1);    
\draw[thick] (-1,1) rectangle (0,2);    
\draw[thick] (-1,-1) rectangle (0,0);   

\draw[red, thick, shorten >=4pt, shorten <=4pt] (-1,1) -- (0,2); 
\draw[red, thick, shorten >=4pt, shorten <=4pt] (1,1) -- (2,0); 
\draw[red, thick, shorten >=4pt, shorten <=4pt] (0,0) -- (1,1); 
\draw[red, thick, shorten >=4pt, shorten <=4pt] (0,0) -- (-1,1); 
\draw[red, thick, shorten >=4pt, shorten <=4pt] (0,0) -- (-1,-1); 

\end{scope}

\begin{scope}[yshift=0.5]
\node at (3.25,0.2) {$\longleftrightarrow$};
\node at (3.25,0.7) {\footnotesize 2-multiflip};
\end{scope}

\begin{scope}[xshift=5.5cm,yshift=-0.5]

\draw[thick] (-1,0) rectangle (0,1);   
\draw[thick] (0,0) rectangle (1,1);    
\draw[thick] (1,0) rectangle (2,1);    
\draw[thick] (-1,1) rectangle (0,2);    
\draw[thick] (-1,-1) rectangle (0,0);   

\draw[red, thick, shorten >=4pt, shorten <=4pt] (-1,2) -- (0,1); 
\draw[red, thick, shorten >=4pt, shorten <=4pt] (1,0) -- (2,1); 
\draw[red, thick, shorten >=4pt, shorten <=4pt] (0,1) -- (1,0); 
\draw[red, thick, shorten >=4pt, shorten <=4pt] (-1,0) -- (0,1); 
\draw[red, thick, shorten >=4pt, shorten <=4pt] (0,-1) -- (-1,0); 

\end{scope}

\begin{scope}[xshift=10cm,yshift=0.5cm]

\foreach \x in {0,1.7320508,3.4641016} {

  \coordinate (T)  at (\x+0, 1);
  \coordinate (TR) at (\x+0.8660254, 0.5);
  \coordinate (BR) at (\x+0.8660254,-0.5);
  \coordinate (B)  at (\x+0,-1);
  \coordinate (BL) at (\x-0.8660254,-0.5);
  \coordinate (TL) at (\x-0.8660254, 0.5);

  \draw[black, thick]
    (T)--(TR)--(BR)--(B)--(BL)--(TL)--cycle;

  \draw[red, thick, shorten >=4pt, shorten <=4pt] (TL)--(TR);
  \draw[red, thick, shorten >=4pt, shorten <=4pt] (TL)--(B);
  \draw[red, thick, shorten >=4pt, shorten <=4pt] (TR)--(B);

}

\node at (5.75,-0.3) {$\longleftrightarrow$};
\node at (5.75,0.2) {\footnotesize 3-multiflip};

\begin{scope}[xshift=8cm]

\foreach \x in {0,1.7320508,3.4641016} {

  \coordinate (T)  at (\x+0, 1);
  \coordinate (TR) at (\x+0.8660254, 0.5);
  \coordinate (BR) at (\x+0.8660254,-0.5);
  \coordinate (B)  at (\x+0,-1);
  \coordinate (BL) at (\x-0.8660254,-0.5);
  \coordinate (TL) at (\x-0.8660254, 0.5);

  \draw[black, thick]
    (T)--(TR)--(BR)--(B)--(BL)--(TL)--cycle;

  \draw[red, thick, shorten >=4pt, shorten <=4pt] (BL)--(BR);
  \draw[red, thick, shorten >=4pt, shorten <=4pt] (BL)--(T);
  \draw[red, thick, shorten >=4pt, shorten <=4pt] (BR)--(T);

}

\end{scope}

\end{scope}

\end{tikzpicture}
\caption{\label{fig:triangulations}
Two pairs of triangulated polygons related by multiflips}
\end{figure}

\begin{theorem}
\label{theorem:triangulations}
Two triangulated polygons are related by a multiflip if and only if their corresponding \SL2-friezes are related by a rescaling.
\end{theorem}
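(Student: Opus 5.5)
Since an \SL2-frieze is determined by its quiddity, and the quiddity of a triangulated polygon counts the triangles at each vertex, I will work throughout with quiddities. Rescaling by $\lambda$ multiplies $\sigma_i$ by $\lambda$ at even vertices and by $\lambda^{-1}$ at odd vertices. So the statement becomes the following combinatorial claim: two triangulations $T,T'$ of the same $N$-gon (with $N$ even) have triangle counts satisfying $\sigma'_e=\lambda\sigma_e$ and $\sigma'_o=\lambda^{-1}\sigma_o$ for some $\lambda$ if and only if $T'$ is a multiflip of $T$. By Lemma~\ref{lemma:23} and the symmetry $T\leftrightarrow T'$, it suffices to treat $\lambda\in\{2,3\}$.

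\emph{Multiflip implies rescaling.} I will compute the quiddity locally. For a $2$-multiflipative polygon, each vertex receives triangle contributions only from the quadrilaterals containing it. Within a quadrilateral, a vertex incident to the diagonal gets $2$ triangles and the other gets $1$. The alternation condition says that globally the vertices incident to a diagonal form one parity class, say even. I still need to check that every quadrilateral containing an even vertex uses its diagonal there, and that odd vertices never meet a diagonal. Then $\sigma_e=2m_e$ and $\sigma_o=m_o$, where $m_v$ is the number of quadrilaterals at $v$. After the multiflip the diagonals sit at the odd vertices, so $\sigma'_e=m_e$ and $\sigma'_o=2m_o$. This is exactly rescaling by $\tfrac12$. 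The hexagon case is identical: a vertex of an internal triangle gets $3$ triangles in its hexagon, and the other vertices get $1$. I will read ``alternate'' as meaning that in each piece the diagonal-vertices are exactly one parity class of the big polygon, which is consistent with the alternating vertices within each quadrilateral or hexagon.

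\emph{Rescaling implies multiflip.} This is the main obstacle. Suppose $\sigma'_e=2\sigma_e$ and $\sigma'_o=\sigma_o/2$, so every odd $\sigma_o$ is even. I will induct on $N$.
\begin{itemize}
\item Every quiddity contains $(1,2)$, $(2,1)$ or $(1,3,1)$. Here a $1$ at an odd vertex is impossible, so $T$ has an ear at an even vertex $e$.
\item In $T'$, the values are $\sigma'_e=2$ and $\sigma'_{e\pm1}=\sigma_{e\pm1}/2$.
\item I will cut off from $T$ the unique quadrilateral consisting of the ear at $e$ together with the triangle on the other side of the ear's diagonal. This yields a smaller polygon with $N-2$ vertices, which is still even. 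Its quiddity drops by $1$ at the two removed vertices' neighbours and so on.
\item I need to show that $T'$ contains the same quadrilateral with the other diagonal, so that after removing it the residual triangulations are again related by a rescaling with factor $2$, and then apply induction.
\end{itemize}
Verifying the last point means checking, via the local triangle counts $\sigma'_e=2$ and the parity constraints, that $T'$ has an ear at $e\pm1$ whose adjacent triangle closes up the same quadrilateral. I expect some case analysis, possibly using the alternative $(1,2)$-type substring located elsewhere.

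For $\lambda=3$ I will argue analogously, peeling off a hexagon with an internal triangle starting from a $(1,3,1)$ pattern. Lemma~\ref{lemma:23} forces such a pattern because $1$'s cannot sit where division by $3$ occurs. A cleaner alternative I would try first is a global one. By Theorem~\ref{theoremA}, rescaling corresponds to a Farey path $a_i/b_i$ and a second path whose vectors are scaled by $\sqrt\lambda$ and $1/\sqrt\lambda$ alternately. The arcs of $T'$ would then be read off from the entries $A'_{i,j}=1$, which are exactly the $A_{i,j}$ with $i-j$ even equal to $\lambda$ on one parity, or with $i-j$ odd equal to $1$. This would identify the diagonals of the quadrilaterals or hexagons directly as the arcs of $T$ that are absent from $T'$.
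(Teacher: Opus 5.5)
\textbf{The gap: the case $\lambda=3$ is not analogous to $\lambda=2$.} A substring $(1,3,1)$ at $(i-1,i,i+1)$, with $i$ odd in your convention, gives only four consecutive polygon sides of the hexagon. Its sixth vertex $j$, the apex of the triangle of $T$ on the far side of $(i-2,i+2)$, cannot be seen by local triangle counts. You need $j$ to have the parity of $i\pm1$. Otherwise the hexagon's vertices do not alternate, and removing it leaves two pieces with an odd number of vertices, to which your induction does not apply. Your simultaneous peeling needs even more. The counts $\sigma'_i=1$, $\sigma'_{i\pm1}=3$ give a hexagon $(i-2,\dots,i+2,u)$ in $T'$ with internal triangle $(i-1,i+1,u)$, and nothing local forces $u=j$.

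This is exactly the step where the paper uses a separate global argument. There $A_{i,j}=2$ and $A_{i-1,j}=3$. If $j$ had the parity of $i$, the rescaled frieze would have $A'_{i,j}=6$ and $A'_{i-1,j}=3$ in a common diamond, so the left side of the diamond rule would be divisible by $3$, which is impossible. The paper then avoids the question $u=j$ altogether. It only shows that $T$ is multiflipative, and identifies $T'$ with the multiflip via the forward direction, the uniqueness in Lemma~\ref{lemma:23}, and the frieze--triangulation bijection.

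\textbf{The case $\lambda=2$ is fine.} The step you expected to need case analysis is immediate. With $(1,2)$ at $(e,e+1)$, $T$ contains the triangles $(e-1,e,e+1)$ and $(e-1,e+1,e+2)$. In $T'$, $\sigma'_{e+1}=1$ makes $(e,e+1,e+2)$ an ear, and $\sigma'_e=2$ forces the other triangle at $e$ to be $(e-1,e,e+2)$. After removing this quadrilateral, the residual quiddities are again related by the factor $2$. This simultaneous peeling of $T$ and $T'$ is a legitimate alternative to the paper's route of peeling $T$ alone.

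\textbf{Your ``cleaner alternative'' could repair $\lambda=3$, but is itself incomplete.} Integrality of $A'$ shows two things: arcs joining vertices of opposite parity are common to $T$ and $T'$, and $T'$ has no arc joining two odd vertices. Hence $u$ is even, the arcs $(i\pm2,u)$ lie in $T$, and so $u=j$. As written, though, that alternative stops short. The common arcs only give a dissection into alternately coloured cells. For $\lambda=3$ you would still have to prove that every cell is a hexagon with an internal triangle. This does not follow cell by cell from the quiddity relation, since a vertex can balance contributions from cells of different sizes.
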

\begin{proof}
Suppose first that two triangulated polygon are related by a multiflip. The effect of a multiflip is to transform the quiddity $\sigma_i$ to $\lambda^{(-1)^{i}}\sigma_i$, for $\lambda\in\{\tfrac13,\tfrac12,2,3\}$. Since any \SL2-frieze is determined by its quiddity (using the diamond rule) we see that the two \SL2-friezes corresponding to the two triangulated polygons are related by a rescaling.

The more difficult part is the converse statement: if an \SL2-frieze admits rescaling by a factor of $\lambda$ to a different \SL2-frieze, the corresponding polygon is $\max(\lambda,1/\lambda)$-multiflipative.

Throughout the rest of the proof, we work with an \SL2-frieze $A$ that admits rescaling. In its quiddity, the number $\lambda=2$ or $3$ divides either every odd-indexed term or every even-indexed term. Colour the vertices of the triangulated polygon that correspond to the bisection of the quiddity that consists exclusively of multiples of $\lambda$ red, and the other vertices black. Now, as we go around the polygon, we encounter red and black vertices alternatingly.

Recall that the quiddity entry is equal to the number of triangles incident to the corresponding vertex in the corresponding triangulated polygon, while an arbitrary pair of vertices $(i,j)$ corresponds to the $A_{i,j}$.

Suppose $\lambda=2$. Then the quiddity of $A$ has a substring $(1,2)$ (or its reversion, which can be dealt with analogously). In the triangulated polygon, this corresponds to a black vertex incident to a single triangle and a red vertex incident to a pair of triangles, so they lie on a side of a quadrilateral in which the diagonal connects the red vertices and there is possibly another part of the polygon adjacent to the opposite side of the quadrilateral. Thus, we have partitioned the polygon into a quadrilateral and the remaining part, in which the vertices are again alternatingly red and black as we go around it, and the number of triangles incident to each red vertex is again even (because we removed two from one of them and did not touch the rest). Therefore, we can keep removing quadrilaterals from the remaining part until nothing is left, obtaining the desired partition.

\begin{figure}[h]
\centering
\begin{tikzpicture}

\draw[black, thick] (0,0) rectangle (1,1);
\draw[red, thick] (0.12,0.12) -- (0.88,0.88);

\draw[black] (0,1.1)--(0.25, 1.5) -- (0.75,1.5) -- (1,1.1) -- cycle;
\node at (0.5,1.25) {\ldots};

\fill[red]   (0,0) circle (1.5pt);
\fill[red]   (1,1) circle (1.5pt);
\fill[black] (1,0) circle (1.5pt);
\fill[black] (0,1) circle (1.5pt);

\fill[red]   (1,1.1) circle (1.5pt);
\fill[black] (0,1.1) circle (1.5pt);

\begin{scope}[shift={(4,0.5)}]

\coordinate (T)  at (90:1);
\coordinate (TR) at (30:1);
\coordinate (BR) at (-30:1);
\coordinate (B)  at (-90:1);
\coordinate (BL) at (-150:1);
\coordinate (TL) at (150:1);

\draw[black, thick]
  (T) -- (TR) -- (BR) -- (B) -- (BL) -- (TL) -- cycle;

\draw[black] (150:1.1) -- (150:1.4) -- (140:1.7) -- (130:1.7) -- (110:1.4) -- (95:1.08) -- cycle;
\node at (-0.8,1) {\ldots};
\draw[black] (30:1.1) -- (30:1.4) -- (40:1.7) -- (50:1.7) -- (70:1.4) -- (85:1.08) -- cycle;
\node at (0.8,1) {\ldots};

\draw[red, thick, shorten >=4pt, shorten <=4pt] (TL) -- (TR);
\draw[red, thick, shorten >=4pt, shorten <=4pt] (TL) -- (B);
\draw[red, thick, shorten >=4pt, shorten <=4pt] (TR) -- (B);

\fill[red]   (TL) circle (1.5pt);
\fill[red]   (TR) circle (1.5pt);
\fill[red]   (B)  circle (1.5pt);

\filldraw[fill=white, draw=black] (T)  circle (1.5pt);
\fill[black] (BR) circle (1.5pt);
\fill[black] (BL) circle (1.5pt);

\fill[red]  (150:1.1) circle (1.5pt);
\fill[red]  (30:1.1) circle (1.5pt);
\filldraw[fill=white, draw=black] (95:1.07) circle (1.5pt);
\filldraw[fill=white, draw=black] (85:1.07) circle (1.5pt);

\node at (0,-0.7) {$i$};
\node at (1.3,-0.5) {$i\!-\!1$};
\node at (0,1.33) {$j$};

\end{scope}

\end{tikzpicture}
\caption{Left: if $\lambda=2$, the triangulated polygon that corresponds the \SL2-frieze $A$ can be split into a quadrilateral and the remaining part, labelled by ellipsis. Right: if $\lambda=3$, the triangulated polygon that corresponds to $A$ can be split into a hexagon with an internal triangle and two more parts; vertex $j$ has to be black, even though it is not immediately obvious.}
\end{figure}

Now suppose that $\lambda=3$. Then its quiddity has a substring $(1,3,1)$. In the triangulated polygon, this corresponds to a red vertex $i$ incident to 3 triangles surrounded by two black vertices $i\pm1$ incident to one of these triangles each; moreover, the triangle in the middle has to have a third triangle adjacent to it, otherwise the polygon is just a pentagon which is never multiflipative. Thus, we have found a hexagon with an internal triangle with 4 consecutive sides being sides of the polygon. Suppose that the vertex $j$ isolated from these 4~sides is black. Then we have partitioned the polygon into a hexagon and two remaining parts, in each of which the vertices are again alternatingly red and black as we go around it, and the number of triangles incident to each red vertex is again a multiple of~3 (because we removed three from the vertices $i-2$ and $i+2$ and did not touch the rest). Therefore, we can keep removing hexagons until nothing is left, obtaining the desired partition.

It remains to show that the vertex $j$ is really black. Suppose otherwise. Note that $A_{i,j} = 2$ and, more importantly, $A_{i-1,j}=3$ (from any of the interpretations of \SL2-frieze entries given in \cite[Section 4.2]{Mo2015}). The $\lambda$-multiflip translates into the rescaling of the \SL2-frieze as follows: if $i,j$ are both red, $A_{i,j}$ gets multiplied by $\lambda$; if $i,j$ have different colours, $A_{i,j}$ remained unchanged; if $i,j$ are both black, $A_{i,j}$ gets divided by $\lambda$. Therefore, after the multiflip, in the resulting \SL2-frieze $A'$ we have $A'_{i,j} = 6$ and $A'_{i-1,j}=3$. But these two entries belong to a diamond in $A'$, making the left-hand side of the diamond rule a multiple of~3, which is impossible.
\end{proof}

\begin{figure}[h]
\centering
\begin{tikzpicture}[scale=0.6]

\begin{scope}

\draw[thick] (-1,0) rectangle (0,1);   
\draw[thick] (0,0) rectangle (1,1);    

\draw[red, thick, shorten >=4pt, shorten <=4pt] (0,0) -- (1,1); 
\draw[red, thick, shorten >=4pt, shorten <=4pt] (0,0) -- (-1,1); 

\end{scope}

\begin{scope}[yshift=0.5]
\node at (2.4,0.2) {$\longleftrightarrow$};
\node at (2.4,0.7) {\footnotesize 2-multiflip};
\end{scope}

\begin{scope}[xshift=4.8cm]

\draw[thick] (-1,0) rectangle (0,1);   
\draw[thick] (0,0) rectangle (1,1);    

\draw[red, thick, shorten >=4pt, shorten <=4pt] (0,1) -- (1,0); 
\draw[red, thick, shorten >=4pt, shorten <=4pt] (-1,0) -- (0,1); 

\end{scope}

\end{tikzpicture}
\caption{The triangulated polygons that correspond to the \SL2-friezes from Examples \ref{ex:basic} and \ref{ex:other}, partitioned into quadrilaterals.}
\end{figure}

Note that for a given multiflipative triangulated polygon, its partition into quadrilaterals or hexagons is unique: it is formed precisely by the diagonals that connect vertices with different colours. In terms of the vertex colouring introduced in the proof, for the black vertices, the corresponding quiddity entries are equal to the number of quadrilaterals or hexagons incident to that vertex, and for the red vertices the number is $\lambda$ times that.

\section{Hyperbolic geometry of Y-friezes}
\label{sec:hyper}

\subsection{Distances between geodesics}
Theorems~\ref{theoremA} and~\ref{theoremC} have attractive interpretations in the hyperbolic plane, illustrated in Figures~\ref{figure1} and~\ref{fig:Ford}. Any simple closed clockwise path in the Farey graph can be represented by a chain of Ford circles $F_i$, where $F_{i}$ is the Ford circle centred at $a_i/b_i$. The $(i,j)$th entry of the corresponding $\text{SL}_2$-frieze is determined by the hyperbolic length between $F_i$ and $F_j$; specifically, $A_{i,j}=\exp \tfrac12 \varrho(F_i,F_j)$, where $\varrho$ denotes hyperbolic distance. This is known as the $\lambda$-length between $F_i$ and $F_j$; see \cite{Pe1987}. 

The same path can also be represented by the sequence of Farey edges $\gamma_i$, where $\gamma_i$ is the Farey edge from $a_{i}/b_{i}$ to $a_{i+1}/b_{i+1}$.
Given two hyperbolic lines $\gamma$ and $\gamma'$ with endpoints $p,q$ and $r,s$, we have $\sinh^2\tfrac12 \varrho (\gamma,\gamma')=[p,q,r,s]$, the cross-ratio, specified in Section~\ref{sec:intro}. Consequently, $B_{i,j}=\sinh^2\tfrac12 \varrho (\gamma_{i},\gamma_j)$. In this way we see that entries of $\text{SL}_2$-friezes encode distances between Ford circles and entries of Y-friezes encode distances between Farey edges.

\subsection{Multiflip and the Farey graph}
Using the geometry of $\mathscr{F}$, we can give a different proof of the combinatorial characterisation of multiflipative polygons.

\begin{proof}[The second proof of the converse statement of Theorem \ref{theorem:triangulations}]

Suppose that two \SL2-friezes $A$ and $A'$ are related by a rescaling, and assume for now that the scaling factors are 2 and $\tfrac12$. The quiddity $\sigma_i$ of one of these \SL2-friezes (say $A$) has $\sigma_i$ even for $i$ even. Let $a_i/b_i$ be a simple closed clockwise path in $\mathscr{F}$ of length $n+3$ satisfying $a_{i-1}b_i-b_{i-1}a_i=1$ and $A_{i,j}=a_jb_i-b_ja_i$. Then $a_{i+1}+a_{i-1}=\sigma_ia_i$ and $b_{i+1}+b_{i-1}=\sigma_ib_i$, for $i\in\mathbb{Z}$ (by, for example, \cite[Lemma~3.2]{ShVaZa2025}). Consequently, $a_{i+1}$ and $a_{i-1}$ have the same parity for $i$ even, as do $b_{i+1}$ and $b_{i-1}$. By applying an element of $\text{SL}_2(\mathbb{Z})$ we can assume that $a_i$ is even and $b_i$ is odd, for all odd $i$. 

Now, removing from $\mathscr{F}$ all odd/odd to odd/even edges gives a tessellation $\mathscr{F}^*$ by ideal quadrilaterals. The path $a_i/b_i$ lies in the 1-skeleton of $\mathscr{F}^*$, so it encloses an ideal polygon partitioned into ideal quadrilaterals. Reinserting the odd/odd to odd/even edges gives a 2-multiflipative triangulated polygon for which only vertices $a_i/b_i$ with $i$ even are incident to diagonals of quadrilaterals. Similarly, the triangulated polygon corresponding to $A'$ is 2-multiflipative, and from the quiddities we see that one triangulated polygon can be obtained from the other by a multiflip.

We can reason in a similar way when $A$ and $A'$ are related by scaling factors of 3 and $\tfrac13$. In this case, $\sigma_i$ is divisible by 3 for even $i$, and we can assume that $a_i$ is divisible by 3 and $b_i$ alternates between $\pm 1 \pmod{3}$ for odd $i$. We then remove from $\mathscr{F}$ all edges for which neither endpoint is congruent to $\pm (0,1) \pmod{3}$ to give a tessellation by ideal hexagons. Reinserting the removed edges gives a 3-multipflipative polygon, and the argument finishes as before.
\end{proof}

The geometry of $\lambda$-multiflipative triangulated polygons is related to the Hecke congruence subgroup of level $\lambda$, $\Gamma_0(\lambda) = \left\{\begin{psmallmatrix}a&b\\c&d\end{psmallmatrix}\in\SL2(\Z)\colon\ c\equiv0\pmod \lambda\right\}$. Specifically, if a 2-multiflipative (and therefore partitioned into triangulated quadrilaterals) triangulated polygon is embedded into $\mathscr{F}$, all ideal quadrilaterals from the partition lie in the same $\Gamma_0(2)$-orbit; if a 3-multiflipative (and therefore partitioned into hexagons with internal triangles) triangulated polygon is embedded into $\mathscr{F}$, all ideal hexagons from the partition lie in the same $\Gamma_0(3)$-orbit.

\subsection{Multiflip of Ford circle arrangement}
\begin{definition}
For a given horocycle centred at a real number in the upper half-plane model of the hyperbolic plane, its \emph{rescaling} by a factor of $\lambda\in\mathbb{R}_{>0}$ is an operation that produces another horocycle with the same centres but the (Euclidean) radius multiplied by $\lambda$.
For a closed chain of tangent horocycles $(H_0,H_1,\ldots,H_{N-1})$ with $N$ even, its \emph{rescaling} by a factor of $\lambda$ is rescaling each horocycle $H_i$ by a factor of $\lambda^{(-1)^i}$.
\end{definition}
Note that when a pair of tangent horocycles is simultaneously rescaled by reciprocal factors, they remain tangent, so a rescaled closed chain of tangent horocycles is also a closed chain of tangent horocycles. The process of rescaling a chain of horocycles in this way commutes with the usual action of real M\"obius transformations on the upper half-plane.

Since an \SL2-frieze corresponds to an entire $\SL2(\Z)$-class of closed chains of Ford circles, we can pick the single representative, called \emph{normalised} in \cite{MoOvTa2015}, that places the 0th Ford circle (and the vertex of the corresponding triangulated polygon in $\mathscr{F}$) at the point $\infty$ and the $(N-1)$th Ford circle at $0$.

\begin{theorem}
\label{prop:norm}
Suppose a $k$-multiflipative triangulated polygon is embedded in $\mathscr{F}$ as a normalised closed path $\gamma$. Let $\lambda=k$ if the vertex $1/0$ is incident to internal diagonals of polygons from the partition provided by multiflipativity, or $\lambda=1/k$ otherwise. Then the vertices of the normalised embedding into $\mathscr{F}$ of the triangulated polygon after a multiflip are the images of the vertices of $\gamma$ under the M\"obius transformation $z\mapsto\lambda z$.
\end{theorem}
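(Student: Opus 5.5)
The plan is to work with the vectors $v_i=\begin{psmallmatrix}a_i\\b_i\end{psmallmatrix}$ of the normalised path $\gamma$. We have $v_0=\begin{psmallmatrix}1\\0\end{psmallmatrix}$, $a_{N-1}=0$, $\det(v_{i-1},v_i)=1$, and $v_{i+N}=-v_i$. By the three-term recurrence already used in the second proof of Theorem~\ref{theorem:triangulations} (from \cite[Lemma~3.2]{ShVaZa2025}), these satisfy $v_{i+1}+v_{i-1}=\sigma_i v_i$. Together with the initial data $v_{-1},v_0$, the quiddity determines the whole normalised path. So it suffices to write down an explicit path with the multiflipped quiddity and the correct normalisation, and to check that its vertices are $\lambda a_i/b_i$.

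The construction is as follows. Put $M=\operatorname{diag}(\lambda^{1/2},\lambda^{-1/2})\in\mathrm{SL}_2(\mathbb{R})$ and $c_i=\lambda^{-(-1)^i/2}$, so that $c_ic_{i-1}=1$ and $c_i$ has period $2$. Define $v'_i=c_iMv_i$.

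First, the determinants are preserved: $\det(v'_{i-1},v'_i)=c_{i-1}c_i\det(v_{i-1},v_i)=1$.

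Second, the recurrence transforms cleanly:
\[
v'_{i+1}+v'_{i-1}=c_i^{-1}M(v_{i+1}+v_{i-1})=c_i^{-2}\sigma_i\, v'_i .
\]
Hence the new quiddity is $\sigma'_i=c_i^{-2}\sigma_i=\lambda^{(-1)^i}\sigma_i$.

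Third, since $N$ is even, $c_{i+N}=c_i$, so $v'_{i+N}=-v'_i$ and the path still closes up.

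Fourth, the normalisation is retained: $v'_0=c_0\lambda^{1/2}\begin{psmallmatrix}1\\0\end{psmallmatrix}=\begin{psmallmatrix}1\\0\end{psmallmatrix}$, and $v'_{N-1}$ has first coordinate $0$.

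Finally, the vertices are $a'_i/b'_i=\lambda a_i/b_i$, because the scalar $c_i$ cancels in the ratio and $M$ acts as $z\mapsto\lambda z$. Since $z\mapsto\lambda z$ is orientation preserving and fixes $\infty$ and $0$, the image path is still simple and clockwise.

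It remains to check that $\sigma'$ is exactly the quiddity of the multiflipped triangulated polygon. By the first half of the proof of Theorem~\ref{theorem:triangulations}, a multiflip multiplies the quiddity alternately by $k^{\pm1}$, with the exponent fixed by the colour of the vertex. So the two alternating patterns $\lambda^{(-1)^i}$ with $\lambda\in\{k,1/k\}$ are the only candidates, and the correct choice is read off at $i=0$ from $\sigma'_0=\lambda\sigma_0$. Once $\sigma'$ is identified with a genuine integral quiddity, the vectors $v'_i$ are integral, because the recurrence has integer coefficients and starts from $v'_{-1},v'_0$. Then, by Theorem~\ref{theoremA} and uniqueness of the normalised representative, $(v'_i)$ is the normalised embedding of the multiflipped polygon.

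The main obstacle is this bookkeeping step: matching the case distinction in the statement (whether $1/0$ is incident to the internal diagonals of the partition pieces) with the sign of the exponent at vertex $0$. For this I would use the remark after Theorem~\ref{theorem:triangulations}: black vertices have quiddity equal to the number of incident pieces, and red vertices have $k$ times that. I would also check directly that $\sigma_0=a_1-a_{-1}$ when $b_{\pm1}=1$, i.e.\ that $\sigma_0$ is the horizontal gap between the neighbours of $\infty$, which gets multiplied by $\lambda$. If the colour convention turns out reversed, the same construction with $\lambda$ replaced by $1/\lambda$ settles it; the substance of the argument is unaffected.

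One more point needs care: integrality of $v'_{-1}$. It must be checked that $v'_{-1}=c_{-1}Mv_{-1}$ equals the integer vector demanded by the new quiddity. This follows once $\sigma'$ is known to be an integral quiddity, by running the recurrence backwards from the integral vectors $v'_{N-1}=-v'_{-1}$ and $v'_0$.
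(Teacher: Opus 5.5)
Your construction is sound and is in substance the paper's proof. The vectors $c_iMv_i$ are exactly $\begin{psmallmatrix}\lambda a_i\\ b_i\end{psmallmatrix}$ for odd $i$ and $\begin{psmallmatrix}a_i\\ b_i/\lambda\end{psmallmatrix}$ for even $i$. This is what the paper gets by reading $a_i=A_{i,-1}$, $b_i=A_{i,0}$ off the frieze rescaled as in Definition~\ref{def:rescaling}. The paper quotes the diagonal description of the normalised path; you instead verify directly that these vectors form the normalised path with quiddity $\lambda^{(-1)^i}\sigma_i$, using determinants, the three-term recurrence, closure, normalisation and uniqueness given the quiddity. That is a valid and slightly more self-contained substitute. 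Your integrality worry is vacuous: $v'_{-1}=-v'_{N-1}=\begin{psmallmatrix}0\\-1\end{psmallmatrix}$ by the normalisation you already checked.

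The step you leave open, however, is not harmless bookkeeping. The paper's proof also passes over it with ``multiflip induces rescaling by a factor of $\lambda$''.

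\begin{itemize}
\item Since $v_{-1}=\begin{psmallmatrix}0\\-1\end{psmallmatrix}$, the recurrence gives $v_1=\begin{psmallmatrix}\sigma_0\\1\end{psmallmatrix}$. So the dilation factor is forced to be $\sigma'_0/\sigma_0$.
\item If $1/0$ is incident to the internal diagonals, it is a red vertex and lies in $k$ triangles of every piece containing it. Hence $\sigma_0=km$, where $m$ is the number of such pieces.
\item The multiflip moves the internal diagonals (or internal triangles) onto the other colour class. Afterwards $1/0$ lies in exactly one triangle of each piece, so $\sigma'_0=m=\sigma_0/k$.
\end{itemize}

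Thus the factor is $1/k$ when $1/0$ is incident to the internal diagonals and $k$ otherwise. This is the reverse of the printed case split.

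The paper's own Figure~\ref{fig:Ford} confirms this. Example~\ref{ex:basic} has normalised path $\infty,4,3,2,1,0$, a fan at $\infty$, so $\infty$ is an endpoint of both quadrilateral diagonals. It is carried to Example~\ref{ex:other} by $z\mapsto z/2$. The printed rule would instead give $z\mapsto 2z$ and the sequence $\infty,8,6,4,2,0$, which is not even a Farey path, since $8\cdot 1-6\cdot 1=2$.

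So your fallback of replacing $\lambda$ by $1/\lambda$ does not settle the statement as written. It proves the corrected statement, with the roles of $k$ and $1/k$ interchanged; the corollary that follows needs the same correction. You should carry out this check explicitly and state the corrected convention, rather than leave it as a hedge.
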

\begin{proof}
The sequence of reduced fractions $\gamma=(1/0=a_0/b_0,a_1/b_1,\dots,a_{N-1}/b_{N-1}=0/1)$ can be read directly from two diagonals of the \SL2-frieze $A$ as $a_i=A_{i,-1}$, $b_i=A_{i,0}$ \cite{MoOvTa2015}. Multiflip in the triangulated polygon induces rescaling of the \SL2-frieze by a factor of $\lambda$, which, in turn, acts on the sequence of reduced fractions as $\gamma\mapsto\left( \tfrac10, \tfrac{\lambda a_1}{b_1}, \tfrac{a_2}{b_2/\lambda}, \tfrac{\lambda a_3}{b_3},\dots,\tfrac{a_{N-2}}{b_{N-2}/\lambda},\tfrac01\right)$.
\end{proof}

\begin{figure}[ht!]
\centering
\begin{tikzpicture}[scale=1.8]

\begin{scope}[xshift=-3cm]
\discfarey

\def\vertices{{{1,0},{4,1},{3,1},{2,1},{1,1},{0,1},{1,0}}}

\foreach \i in {0,...,5} {
    \pgfmathsetmacro{\numA}{\vertices[\i][0]}
    \pgfmathsetmacro{\denA}{\vertices[\i][1]}	
     \dischorocycle[black,fill=IndianRed,fill opacity=0.5](\numA:\denA);
}

\foreach \i in {0,...,5} {
    \pgfmathsetmacro{\numA}{\vertices[\i][0]}
    \pgfmathsetmacro{\denA}{\vertices[\i][1]}

    \pgfmathtruncatemacro{\nexti}{\i+1}
    \pgfmathsetmacro{\numB}{\vertices[\nexti][0]}
    \pgfmathsetmacro{\denB}{\vertices[\nexti][1]}

     \shline[ultra thick](\numA:\denA:\numB:\denB); 
}
\end{scope}

\begin{scope}
\discfarey

\def\vertices{{{sqrt(2),0},{4/sqrt(2),1/sqrt(2)},{3*sqrt(2),sqrt(2)},{2/sqrt(2),1/sqrt(2)},{sqrt(2),sqrt(2)},{0,1/sqrt(2)},{sqrt(2),0}}}

\foreach \i in {0,...,5} {
    \pgfmathsetmacro{\numA}{\vertices[\i][0]}
    \pgfmathsetmacro{\denA}{\vertices[\i][1]}	
     \dischorocyclenolabel[black,fill=IndianRed,fill opacity=0.5](\numA:\denA);
}

\end{scope}

\begin{scope}[xshift=3cm]
\discfarey

\def\vertices{{{1,0},{2,1},{3,2},{1,1},{1,2},{0,1},{1,0}}}

\foreach \i in {0,...,5} {
    \pgfmathsetmacro{\numA}{\vertices[\i][0]}
    \pgfmathsetmacro{\denA}{\vertices[\i][1]}	
     \dischorocycle[black,fill=IndianRed,fill opacity=0.5](\numA:\denA);
}

\foreach \i in {0,...,5} {
    \pgfmathsetmacro{\numA}{\vertices[\i][0]}
    \pgfmathsetmacro{\denA}{\vertices[\i][1]}

    \pgfmathtruncatemacro{\nexti}{\i+1}
    \pgfmathsetmacro{\numB}{\vertices[\nexti][0]}
    \pgfmathsetmacro{\denB}{\vertices[\nexti][1]}

     \shline[ultra thick](\numA:\denA:\numB:\denB); 
}
\end{scope}

\end{tikzpicture}
\caption{\label{fig:Ford}
Left: Farey path and chain of Ford circles that correspond to Example \ref{ex:basic}. Centre: horocycles after rescaling by a factor of~$\tfrac12$. Right: after M\"obius transformation $z\mapsto\tfrac12z$, we get the chain of Ford circles that corresponds to Example~\ref{ex:other}.}
\end{figure}

\begin{corollary}
Suppose a normalised closed chain of Ford circles corresponds to a $k$-multiflipative triangulated polygon. Let $\lambda=k$ if the vertex $\infty$ is incident to internal diagonals of polygons from the partition provided by multiflipativity, or $\lambda=1/k$ otherwise. Then rescaling the chain by the factor of ${\lambda}$ and then applying the M\"obius transformation $z\mapsto \lambda z$ gives the normalised chain of Ford circles that corresponds to the multiflipped polygon.
\end{corollary}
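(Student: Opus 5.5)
The plan is to deduce the corollary from Theorem~\ref{prop:norm}, using the fact that the Ford circle centred at $a/b$ (reduced) has Euclidean radius $1/(2b^2)$. The argument will proceed at the level of individual horocycles: I will check that each circle of the original normalised chain, once rescaled and then mapped by $z\mapsto\lambda z$, becomes the Ford circle at the corresponding vertex of the multiflipped normalised path.

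First I will fix notation. Let the normalised chain be $(F_0,\dots,F_{N-1})$, where $F_i$ is centred at $a_i/b_i$ with $a_0/b_0=1/0$ and $a_{N-1}/b_{N-1}=0/1$, and $N$ is even because multiflipativity forces $n$ odd. By the proof of Theorem~\ref{prop:norm}, the multiflipped normalised path has vertices $\lambda a_i/b_i$ for $i$ odd and $a_i/(b_i/\lambda)$ for $i$ even. Its $i$th reduced denominator is therefore $b_i$ for $i$ odd and $b_i/\lambda$ for $i$ even, and these are integers because the rescaled frieze is integral. For finite centres, the new Ford circle has radius $1/(2b_i^2)$ when $i$ is odd and $\lambda^2/(2b_i^2)$ when $i$ is even.

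Next I will compute the composite operation. Rescaling multiplies the radius of $F_i$ by $\lambda^{(-1)^i}$. The map $z\mapsto\lambda z$ is a Euclidean dilation of the upper half-plane, so it multiplies centres and radii by $\lambda$. For $i$ odd, the result is a circle centred at $\lambda a_i/b_i$ with radius $\lambda\cdot\lambda^{-1}/(2b_i^2)=1/(2b_i^2)$. For $i$ even and $i\neq0$, it is a circle centred at $\lambda a_i/b_i$ with radius $\lambda\cdot\lambda/(2b_i^2)=\lambda^2/(2b_i^2)$. Both agree with the Ford circles of the multiflipped path computed above.

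The remaining case, and the main point requiring care, is the horocycle at $\infty$. $F_0$ is the line $\operatorname{Im}z=1$, and there "radius" must be read consistently: a horocycle at $\infty$ at height $h$ should behave like radius $1/h$, so that rescaling commutes with Möbius maps as the paper asserts. Under this reading, rescaling by $\lambda^{(-1)^0}=\lambda$ gives the line at height $1/\lambda$, and dilating by $\lambda$ returns it to height $1$, which is again the Ford horocycle at $\infty$. A cleaner way to handle all indices, including $\infty$, is through tangency: the rescaled chain is a closed chain of tangent horocycles, and the Möbius image preserves tangency. Since a horocycle tangent to a given one at a prescribed centre is unique, and the finite circles $F_1$ and $F_{N-1}$ have already been matched, the image of $F_0$ is forced to be the Ford horocycle at $\infty$. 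I will include this argument as the justification at $\infty$.
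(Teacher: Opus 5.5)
Your argument is correct, but it verifies the conclusion differently from the paper.

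\textbf{The paper's route.} It takes the new centres from Theorem~\ref{prop:norm} and checks a single horocycle. The one at $0$ is $F_{N-1}$, with $N-1$ odd, so it is rescaled by $\lambda^{-1}$, dilated by $\lambda$, and returns to diameter $1$. Tangency then propagates the Ford property around the chain: the new centres form a Farey path, Ford circles at consecutive centres are tangent, and a horocycle with prescribed centre tangent to a given horocycle is unique.

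\textbf{Your route.} You check every finite horocycle directly against the radius $1/(2b^2)$. This needs an input the paper's route avoids: that $b_i$ (for $i$ odd) and $b_i/\lambda$ (for $i$ even) are the \emph{reduced} denominators of the new vertices. You should say why. For instance, they are the entries $A'_{i,0}$ of the integral frieze $A'$, and consecutive pairs satisfy $a'_{i-1}b'_i-b'_{i-1}a'_i=1$ by the diamond rule.

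\textbf{What each buys.} The paper's route is shorter. Yours needs no propagation step, and it makes explicit something the paper leaves tacit. The paper defines rescaling only for horocycles centred at real numbers, so a convention at $\infty$ is needed for the statement to make sense at $F_0$. The paper's tangent-chain argument needs the same convention. Your choice, that rescaling by $\lambda$ divides the height by $\lambda$, is exactly the one forced by tangency and by compatibility with M\"obius maps.

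\textbf{A caveat affecting both arguments.} Both take $\lambda$ from Theorem~\ref{prop:norm}. Your step ``these are integers because the rescaled frieze is integral'' holds only if $\lambda$ is the factor of the rescaling actually induced by the multiflip. The case assignment in the statement is reversed relative to that.
\begin{itemize}
\item A multiflip divides by $k$ the quiddity at a vertex incident to the internal diagonals.
\item Rescaling by $\lambda$ multiplies $\sigma_0$ by $\lambda$.
\item Hence $\lambda=1/k$, not $k$, when $\infty$ is incident to the internal diagonals.
\end{itemize}
Figure~\ref{fig:Ford} confirms this. The path $\infty,4,3,2,1,0$ is a fan at $\infty$, and the multiflip is realised by $z\mapsto\tfrac12 z$. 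By contrast, $z\mapsto 2z$ would give $\infty,8,6,4,2,0$, which is not a Farey path. So read $\lambda$ as ``the factor of the rescaling induced by the multiflip''; with that reading, your argument goes through.
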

\begin{proof}
By Theorem~\ref{prop:norm}, the centres of the horocycles produced through rescaling (which does not affect the centres) and the M\"obius transofrmation are indeed the same as the centres of the desired configuration of the Ford circles. Moreover, the horocycle centred at 0 has the correct Euclidean diameter 1 after rescaling and the M\"obius transformation, so it is a Ford circle, and therefore all other horocycles in the chain are also Ford circles, as required.
\end{proof}

\section*{Acknowledgements}
We are grateful to Antoine de Saint Germain for useful discussions, including a sketch of the description of multiflips.
This research is supported by EPSRC grants EP/W002817/1 (IS) and EP/W524098/1 (AZ). This work benefited from discussions at the ICMS workshop \textit{Farey's legacy in frieze patterns and discrete geometry}.
		
\noindent \textit{Data availability:} there is no data associated with this article.

\end{document}